\numberwithin{equation}{section}
\newtheorem{thm}{Theorem}
\newtheorem{corr}[thm]{Corollary}
\newtheorem{lem}[thm]{Lemma}
\newtheorem{prop}[thm]{Proposition}
\newtheorem{exam}{Example}
\theoremstyle{definition}
\newtheorem{defn}{Definition}
\theoremstyle{remark}
\newtheorem{rem}[thm]{Remark}
\def\R{\mathbb R}
\def\SS{\mathbb S}
\def\pt{\partial}
\DeclareMathOperator*{\essinf}{ess\,inf}
\begin{document}
\title[Sharp bounds for the first two eigenvalues]{Sharp bounds for the first two eigenvalues of an exterior Steklov eigenvalue problem}
\author[C.~Xiong]{Changwei~Xiong}
\address{School of Mathematics, Sichuan University, Chengdu 610065, Sichuan,  P.~R.~China}
\email{\href{mailto:changwei.xiong@scu.edu.cn}{changwei.xiong@scu.edu.cn}}
\date{\today}
\thanks{This research was supported by National Key R and D Program of China 2021YFA1001800, NSFC Grant no.~12171334, and the funding (no.~1082204112549) from Sichuan University.}
\subjclass[2010]{{35P15}, {58C40}}
\keywords{Exterior Euclidean domain; Steklov eigenvalue problem; Capacity}

\maketitle

\begin{abstract}
Let $U\subset \R^n$ ($n\geq 3$) be an exterior Euclidean domain with smooth boundary $\pt U$. We consider the Steklov eigenvalue problem on $U$. First we derive a sharp lower bound for the first eigenvalue in terms of the support function and the distance function to the origin of $\pt U$. Second under various geometric conditions on $\pt U$ we obtain sharp upper bounds for the first eigenvalue. Along the proof, we get a sharp upper bound for the capacity of $\pt U$ when $n=3$ and $\pt U$ is connected. Last we also discuss an upper bound for the second eigenvalue.
\end{abstract}

\section{Introduction}\label{sec1}

The investigation on various eigenvalue problems is one of the most important and extensively-studied topics in the fields of the differential geometry, partial differential equations, etc. See e.g. the excellent surveys \cite{Pay67,AB07,BLL12,GP14} on different types of eigenvalue problems. In this paper we are concerned with an exterior Steklov eigenvalue problem in the Euclidean space $\R^n$ ($n\geq 3$). The classical (interior) Steklov eigenvalue problem has received considerable attention since it was introduced by Steklov \cite{Ste02} around 1900; see \cite{KKK14} for a historical introduction and \cite{GP14} for a specialized review. In contrast, the exterior Steklov eigenvalue problem has not been much studied; see e.g. \cite{Pay56,AH14} for some results on it. However, besides in the differential geometry and partial differential equations, the exterior Steklov eigenvalue problem also plays an indispensable role in the potential theory, the mathematical physics, the functional analysis etc. So we believe it is desirable to contribute and draw more attention to this eigenvalue problem.

To describe the exterior Steklov eigenvalue problem let us first set the context. Let $U\subset \R^{n}$ ($n\geq 3$) be an exterior domain in the Euclidean space $\R^{n}$. Namely, $U$ is a non-empty connected open set in $\R^n$ such that $\R^n\setminus U$ is non-empty and compact. Suppose further that the origin $O\in \R^n\setminus \overline{U}$ and the boundary $\pt U$ is the union of finitely many disjoint, closed, Lipschitz hypersurfaces, each of finite hypersurface area. 

We consider the following exterior Steklov eigenvalue problem:
\begin{equation}\label{eq-problem}
\begin{cases}
\Delta \varphi=0,\text{ in }U,\\
-\dfrac{\pt \varphi}{\pt \nu}=\xi \varphi, \text{ on }\pt U,
\end{cases}
\end{equation}
where $\nu$ is the unit normal vector along $\pt U $ pointing into $U$ and $\varphi$ belongs to the space $E^1(U)$ of functions on $U$ having finite energy. We refer to Section~\ref{sec2} for the precise definition for $E^1(U)$. This eigenvalue problem is well-posed and has a discrete spectrum:
\begin{equation*}
0<\xi_1\leq \xi_2\leq \cdots \nearrow +\infty.
\end{equation*}
The variational characterization for $\xi_i$ $(i\geq 1)$ reads
\begin{equation}\label{v-c}
\xi_i=\inf_{\substack{\varphi\in E^1(U)\\\int_{\pt U} \varphi \varphi_jda=0,\; j=1,2,\dots,i-1.}}\frac{\int_{U}|\nabla \varphi|^2 dx}{\int_{\pt U} \varphi^2da},
\end{equation}
where $\varphi_j$ ($j=1,2,\dots,i-1$) are the first $(i-1)$ eigenfunctions. For the analytic setting and basic properties of the eigenvalue problem \eqref{eq-problem}, we refer to the work \cite{AH14}.

In this paper we first derive the following sharp lower bound for the first eigenvalue $\xi_1$.
\begin{thm}\label{thm0}
Let $U\subset \R^{n}$ ($n\geq 3$) be an exterior domain in the Euclidean space $\R^{n}$ with $C^1$ boundary. Suppose the origin $O\in \R^n\setminus \overline{U}$. Then there holds
\begin{equation}\label{bound0}
\xi_1\geq (n-2)\min_{\pt U} \frac{\langle x,\nu\rangle}{|x|^2},
\end{equation}
with the equality if and only if $U$ is the exterior domain of some centered ball $B_R$ ($R>0$), i.e., $U=\R^n\setminus \overline{B_R}$.
\end{thm}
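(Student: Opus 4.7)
My plan is to test the Steklov problem against the radial harmonic function $\psi(x)=|x|^{2-n}$, which (because $n\geq 3$) decays fast enough to lie in $E^1(U)$ and to kill all boundary terms at infinity when integrating by parts on $U$. A preliminary step will be to show that $\varphi_1$ may be taken strictly positive on $\overline U$: since $|\varphi_1|$ has the same Rayleigh quotient as $\varphi_1$, it is itself a minimizer and therefore a harmonic eigenfunction, and the strong maximum principle in the connected set $U$, combined with a Hopf-type lemma on $\pt U$ for the Steklov (Robin-type) boundary condition, forces $\varphi_1$ to have constant sign and to be nowhere zero.

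The main computation is a two-way application of Green's identity to $\varphi_1$ and $\psi$ on $U$. Using $\Delta\psi=0$ produces
\[
\int_U\nabla\varphi_1\cdot\nabla\psi\,dx \;=\; -\!\int_{\pt U}\varphi_1\,\frac{\pt\psi}{\pt\nu}\,da \;=\; (n-2)\!\int_{\pt U}\varphi_1\,\frac{\langle x,\nu\rangle}{|x|^{n}}\,da,
\]
while using $\Delta\varphi_1=0$ together with the Steklov condition produces
\[
\int_U\nabla\varphi_1\cdot\nabla\psi\,dx \;=\; \xi_1\!\int_{\pt U}\varphi_1\psi\,da \;=\; \xi_1\!\int_{\pt U}\frac{\varphi_1}{|x|^{n-2}}\,da.
\]
Equating these two expressions exhibits $\xi_1$ as a weighted average of the function $(n-2)\langle x,\nu\rangle/|x|^2$ against the strictly positive weight $\varphi_1/|x|^{n-2}$ on $\pt U$; bounding this average below by its pointwise minimum gives \eqref{bound0}.

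The delicate part is the characterization of equality. Equality in the weighted-average step forces $\langle x,\nu\rangle/|x|^2$ to equal a constant $c>0$ on $\pt U$, and a direct substitution then shows that $\psi$ itself is an eigenfunction with eigenvalue $(n-2)c$. My plan for deducing that $\pt U$ is the centered sphere of radius $1/c$ is to inspect critical points of the smooth function $|x|$ restricted to the compact hypersurface $\pt U$. At any such critical point $p$ the tangential gradient of $|x|$ vanishes, which forces $x(p)$ to be a scalar multiple of $\nu(p)$; the identity $\langle x,\nu\rangle=c|x|^2$ together with $c>0$ then pins down $|x(p)|=1/c$. Applied to the global maximum and the global minimum of $|x|$ on $\pt U$, this shows that $|x|\equiv 1/c$ on $\pt U$, whence $\pt U$ is an open-and-closed subset of a connected sphere and therefore coincides with it, giving $U=\R^n\setminus\overline{B_{1/c}}$. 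The reverse direction is a direct calculation with $\varphi_1=|x|^{2-n}$ on $U=\R^n\setminus\overline{B_R}$.
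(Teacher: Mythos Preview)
Your argument is correct and takes a genuinely different route from the paper's. The paper proves a more general lemma (Theorem~9) in the spirit of Protter: for any vector field $\vec P$ on $\overline U$ with $\mathrm{div}\,\vec P\ge|\vec P|^2$ and suitable decay, one expands $\int_U|\nabla\varphi+\varphi\vec P|^2\,dx\ge 0$ via the divergence theorem applied to $\varphi^2\vec P$, obtaining $\xi_1\ge\essinf_{\partial U}\langle\vec P,\nu\rangle$. The specific choice $\vec P=(n-2)x/|x|^2$ then gives \eqref{bound0}. Your approach instead pairs the eigenfunction $\varphi_1$ with the harmonic $\psi=|x|^{2-n}$ via Green's identity and reads off $\xi_1$ as a weighted average. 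The two are closely related---indeed $\vec P=-\nabla\log\psi$, and $\mathrm{div}\,\vec P=|\vec P|^2$ is exactly $\Delta\psi=0$---but the mechanics differ: the paper's square-completion works with $\varphi^2$ and so needs no sign information on $\varphi_1$, while your averaging argument hinges on $\varphi_1>0$. What the paper's route buys is the general Theorem~9 and, in the equality analysis, the relation $\nabla\varphi=-\varphi\vec P$ (i.e.\ $\varphi=c|x|^{2-n}$) for free from the Cauchy--Schwarz equality, which immediately shows $\varphi$ is nowhere zero on $\partial U$. Your route is more elementary and yields $\xi_1$ explicitly as an average, but you must supply positivity of $\varphi_1$ on $\partial U$ separately; be aware that the Hopf lemma you invoke is standard under an interior sphere condition, so on a merely $C^1$ boundary that step deserves a word of justification (for the inequality itself, $\varphi_1\ge 0$ with $\varphi_1\not\equiv 0$ on $\partial U$ already suffices). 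Your treatment of the rigidity via critical points of $|x|$ on $\partial U$ matches the paper's.
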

\begin{rem}
The bound \eqref{bound0} is meaningful only when the boundary $\pt U$ is star-shaped with respect to the origin, i.e., $\langle x,\nu\rangle >0$ on $\pt U$. And if the boundary is only Lipschitz, the lower bound becomes (cf. Theorem~\ref{thm9} below)
\begin{equation*}
\xi_1\geq (n-2)\essinf_{\pt U} \frac{\langle x,\nu\rangle}{|x|^2}.
\end{equation*}
\end{rem}
In view of the variational characterization \eqref{v-c} for $\xi_1$, we get the following Poincar\'{e}--trace inequality.
\begin{corr}
Assumptions are as in Theorem~\ref{thm0}. There holds
\begin{equation}
\int_{U}|\nabla \varphi|^2 dx\geq (n-2)\min_{\pt U} \frac{\langle x,\nu\rangle}{|x|^2}\int_{\pt U} \varphi^2da,\quad \varphi\in E^1(U),
\end{equation}
with the equality if and only if $U$ is the exterior domain of some centered ball $B_R$ ($R>0$), i.e., $U=\R^n\setminus \overline{B_R}$, and $\varphi$ is its first exterior Steklov eigenfunction.
\end{corr}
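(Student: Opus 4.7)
The plan is to establish the corresponding Poincar\'e--trace inequality
\begin{equation*}
\int_U|\nabla\varphi|^2\,dx\geq (n-2)\min_{\pt U}\frac{\langle x,\nu\rangle}{|x|^2}\int_{\pt U}\varphi^2\,da,\qquad \varphi\in E^1(U),
\end{equation*}
and then feed it into the variational characterization \eqref{v-c} to deduce \eqref{bound0}. The guiding observation is that $|x|^{2-n}$ is the fundamental harmonic on $\R^n\setminus\{O\}$ with gradient $-(n-2)\,x/|x|^n$, so the companion vector field $X:=x/|x|^2$ is smooth on $\overline U$ (because $O\notin\overline U$) and satisfies $\operatorname{div}X=(n-2)/|x|^2$.

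The trace inequality comes from the pointwise identity
\begin{equation*}
0\leq \Bigl|\nabla\varphi+(n-2)\varphi\,\tfrac{x}{|x|^2}\Bigr|^2 = |\nabla\varphi|^2+2(n-2)\varphi\nabla\varphi\cdot\tfrac{x}{|x|^2}+(n-2)^2\tfrac{\varphi^2}{|x|^2}.
\end{equation*}
Integrating over $U$ and rewriting the cross term through $\operatorname{div}(\varphi^2 X)=2\varphi\nabla\varphi\cdot X+(n-2)\varphi^2/|x|^2$ followed by the divergence theorem---the inward-pointing $\nu$ produces the boundary contribution $-\int_{\pt U}\varphi^2\langle x,\nu\rangle/|x|^2\,da$---one sees the two $\int_U\varphi^2/|x|^2$ volume integrals cancel exactly, leaving $\int_U|\nabla\varphi|^2\,dx\geq (n-2)\int_{\pt U}\varphi^2\langle x,\nu\rangle/|x|^2\,da$. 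Minimizing over $\pt U$ yields the trace inequality. I would first run this argument for $\varphi\in C^\infty(\overline U)$ with bounded support (so no contribution from infinity) and then extend to $\varphi\in E^1(U)$ by density, using Hardy's inequality $\int_U\varphi^2/|x|^2\,dx\lesssim\int_U|\nabla\varphi|^2\,dx$ (valid for $n\geq 3$) to make sense of all the volume integrals.

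For the rigidity, apply the argument to $\varphi=\varphi_1$, the first eigenfunction. Equality in the pointwise bound forces $\nabla\varphi_1=-(n-2)\varphi_1 x/|x|^2$, so $\varphi_1=C|x|^{2-n}$; equality after minimization forces $\langle x,\nu\rangle/|x|^2\equiv k:=\xi_1/(n-2)$ pointwise on $\pt U$, so $\pt U$ is in particular star-shaped with respect to $O$. Writing each component of $\pt U$ as a radial graph $\{R(\omega)\omega:\omega\in S^{n-1}\}$, the identity recasts as the first-order PDE $R^2+|\nabla_{S^{n-1}}R|^2=1/k^2$; evaluating at any extremum of $R$ (where $\nabla R=0$) gives $R=1/k$ there, hence $R\equiv 1/k$. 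Each component of $\pt U$ is therefore the sphere of radius $1/k$ about $O$, and distinct such spheres are impossible, so $\pt U$ is connected and $U=\R^n\setminus\overline{B_{1/k}}$. The most delicate step I anticipate is the clean justification of the divergence identity for general $\varphi\in E^1(U)$ under the weak boundary regularity $\pt U\in C^1$; once that is handled, both the integrated estimate and the geometric rigidity step are direct.
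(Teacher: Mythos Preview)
Your proposal is correct and uses essentially the same computation as the paper: expanding $|\nabla\varphi+(n-2)\varphi\,x/|x|^2|^2\geq 0$, integrating by parts via the divergence identity for $X=x/|x|^2$ so that the $(n-2)^2\varphi^2/|x|^2$ terms cancel, and reading off rigidity from $\nabla\varphi=-(n-2)\varphi X$ together with $\langle x,\nu\rangle/|x|^2\equiv\mathrm{const}$ on $\partial U$. The only difference is the order of deduction---the paper runs this identity on the first eigenfunction (whose $O(|x|^{2-n})$ decay handles the term at infinity) to obtain Theorem~\ref{thm0} and then invokes the variational characterization~\eqref{v-c} to get the Corollary, whereas you establish the trace inequality directly for all $\varphi\in E^1(U)$ via density; your radial-graph argument for rigidity is equivalent to the paper's max/min-of-$|x|$ observation.
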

To the best of our knowledge, Theorem~\ref{thm0} is the first lower bound for the first exterior Steklov eigenvalue. We note a remark by L.~E.~Payne in \cite{Pay67}, ``Upper bounds for physically interesting eigenvalues are usually not difficult to obtain, but in most cases lower bounds are far more important'' (Page~461 in \cite{Pay67}). In light of his remark, we hope our Theorem~\ref{thm0} would motivate more studies on the lower bounds of the exterior Steklov eigenvalues. As for the proof of Theorem~\ref{thm0}, our method is inspired by the work \cite{Pro60} where nice lower bounds for the first Dirichlet eigenvalue of elliptic operators were derived. Two main ingredients in our proof are the variational characterization \eqref{v-c} for $\xi_1$ and a suitably chosen vector field on $\overline{U}$.

Second we shall obtain the following sharp upper bounds for the first eigenvalue $\xi_1$.
\begin{thm}\label{thm1}
Let $U\subset \R^n$ ($n\geq 3$) be an exterior domain with smooth boundary.
\begin{enumerate}
  \item If $\pt U$ is convex, then
  \begin{align}\label{bound1}
\xi_1\leq  \frac{1}{|\pt U|}\frac{1}{\int_0^\infty (\sum_{i=0}^{n-1}\int_{\pt U} \sigma_i da \cdot t^i)^{-1}dt},
\end{align}
where $\sigma_i$ ($i=1,2,\dots,n-1$) denotes the $ith$ mean curvature of the boundary (e.g., $\sigma_1$ is the summation of the principal curvatures of the boundary).
\item If $\pt U$ is star-shaped with respect to the origin (i.e., the support function $\langle x,\nu\rangle>0$), then
		\begin{equation}\label{bound2}
			\xi_1\leq   \frac{n-2}{|\pt U|}\int_{\pt U} \langle x,\nu\rangle^{-1}da.
		\end{equation}
  \item If $\pt U$ is mean convex and outer-minimizing (i.e., $\pt U$ minimizes area among all hypersurfaces homologous to $\pt U$ in $U$), then
  \begin{align}\label{bound3}
  \xi_1\leq \frac{n-2}{(n-1)|\pt U|}\int_{\pt U}\sigma_1 da.
  \end{align}
  \item If $\pt U$ is mean convex and star-shaped, then
  \begin{align}\label{bound4}
  \xi_1\leq \frac{n-2}{(n-1)|\pt U|}\int_{\pt U}\sigma_1 da.
  \end{align}
  \item If $n=3$ and $\Sigma=\pt U$ is connected, then
  \begin{align}\label{bound5}
  \xi_1&\leq \sqrt{\frac{4\pi}{|\Sigma|}} \frac{\sqrt{\frac{\int_{\Sigma}H^2 da}{16\pi}-1}}{\mathrm{arsinh}\sqrt{\frac{\int_{\Sigma}H^2 da}{16\pi}-1}},\quad H:=\sigma_1.
  \end{align}
  \item If $\pt U$ is only smooth, then
   \begin{align}\label{bound6}
   \xi_1\leq (n-2) \left(\frac{\int_{\pt U}|\sigma_1/(n-1)|^{(2n-3)/(n-1)}da}{|\pt U|}\right)^{(n-1)/(2n-3)}.
   \end{align}
\end{enumerate}
Moreover, the equality holds in \eqref{bound1}, \eqref{bound3}, \eqref{bound4}, \eqref{bound5} or \eqref{bound6} if and only if $U$ is the exterior domain of a round ball; the equality holds in \eqref{bound2} if and only if $U$ is the exterior domain of a round ball centered at the origin.
\end{thm}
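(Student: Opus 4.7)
The common strategy is to apply the Rayleigh-type variational characterization \eqref{v-c}: for each of the six bounds, I would exhibit an explicit admissible test function $\varphi \in E^1(U)$ whose Rayleigh quotient realizes the right hand side, always arranging so that $\varphi$ reduces to (a scalar multiple of) $|x|^{2-n}$ on the exterior of the round ball. This way the equality case on the ball is automatic, and the rigidity statements follow by tracking when the Cauchy-Schwarz / Newton inequalities used in the proof degenerate. A useful guiding heuristic is that plugging in the harmonic capacitary potential of $\pt U$ (value $1$ on $\pt U$, decaying to $0$ at infinity) immediately yields the general estimate $\xi_1 \le \mathrm{cap}(\pt U)/|\pt U|$, so most of \eqref{bound1}--\eqref{bound6} may be read as sharp geometric upper bounds for $\mathrm{cap}(\pt U)$.

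For \eqref{bound1} I would use the distance ansatz $\varphi(x) = f(\mathrm{dist}(x, \pt U))$. Convexity of $\pt U$ makes the parallel hypersurfaces $\pt U_t$ smooth foliations of $U$ and yields the Steiner formula $|\pt U_t| = \sum_{i=0}^{n-1} \int_{\pt U}\sigma_i\, da \cdot t^i$, reducing the Rayleigh quotient to $\int_0^\infty f'(t)^2 |\pt U_t|\, dt / (f(0)^2|\pt U|)$, which by Cauchy-Schwarz is optimized at the explicit value in \eqref{bound1}. For \eqref{bound2}, writing $\pt U = \{r(\omega)\omega : \omega \in S^{n-1}\}$, I take $\varphi(x) = (r(x/|x|)/|x|)^{n-2}$, which equals $1$ on $\pt U$ and decays like $|x|^{2-n}$; a direct polar-coordinate computation (using $\langle x,\nu\rangle da = r^n d\omega$ and $da = r^{n-2}\sqrt{r^2+|\nabla_\omega r|^2}\, d\omega$) shows $\int_U |\nabla\varphi|^2 dx = (n-2)\int_{\pt U}\langle x,\nu\rangle^{-1} da$ while $\int_{\pt U}\varphi^2 da = |\pt U|$, giving \eqref{bound2} exactly.

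For \eqref{bound3} and \eqref{bound4} I would run inverse mean curvature flow (IMCF) from $\pt U$ — weakly \`a la Huisken-Ilmanen under the outer-minimizing hypothesis for \eqref{bound3}, and smoothly \`a la Gerhardt-Urbas under the star-shaped mean-convex hypothesis for \eqref{bound4} — yielding a proper function $u\colon \overline{U}\to[0,\infty)$ with $|\nabla u|=H$ on the level sets $\Sigma_t = \{u=t\}$ and $|\Sigma_t|=|\pt U|e^t$. With $\varphi = g(u)$ the coarea formula gives $\int_U|\nabla \varphi|^2 dx = \int_0^\infty g'(t)^2 \Psi(t)\,dt$ where $\Psi(t) = \int_{\Sigma_t} H\, dA$, and the IMCF evolution $\Psi'(t) = \int_{\Sigma_t}(H - |A|^2/H)\,dA$ combined with the Newton inequality $|A|^2 \ge H^2/(n-1)$ gives $\Psi(t) \le e^{(n-2)t/(n-1)}\Psi(0)$. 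The ball-motivated profile $g(t) = e^{-(n-2)t/(n-1)}$ then produces Rayleigh quotient $(n-2)\Psi(0)/((n-1)|\pt U|)$, which is \eqref{bound3} and \eqref{bound4}. For \eqref{bound5} in dimension $3$, a sharper calculation is needed: the Geroch-Gauss-Bonnet ODE $\Phi'(t) \le 16\pi - \Phi(t)$ on $\Phi(t) = \int_{\Sigma_t} H^2\, dA$ gives $\Phi(t) \le 16\pi + (\Phi_0 - 16\pi)e^{-t}$, whence by Cauchy-Schwarz $\Psi(t)^2 \le |\Sigma_t| \Phi(t) \le 16\pi|\pt U|(e^t + \beta^2)$ with $\beta^2 = \Phi_0/(16\pi) - 1$; optimizing $g$ by Cauchy-Schwarz and computing $\int_0^\infty dt/\sqrt{e^t+\beta^2} = 2\,\mathrm{arsinh}(\beta)/\beta$ via the substitution $s = e^{t/2}$ assembles \eqref{bound5}.

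For \eqref{bound6}, which requires only smoothness, I would invoke a Willmore-type capacity estimate in the spirit of the Agostiniani-Fogagnolo-Mazzieri monotonicity formulas (bounding $\mathrm{cap}(\pt U)$ by the $L^{(2n-3)/(n-1)}$-norm of $|H/(n-1)|$, with no mean-convexity hypothesis) and combine with $\xi_1 \le \mathrm{cap}(\pt U)/|\pt U|$, together with Jensen's inequality to produce the averaged form. The principal technical obstacle is in \eqref{bound3} and \eqref{bound5}: the smooth-IMCF evolution identities are not directly available under the weak outer-minimizing framework, so one must invoke the Huisken-Ilmanen machinery (connectedness preservation, approximation by elliptic regularizations, lower semicontinuity of the Hawking-type monotone quantities) to push the geometric inequalities through the limit. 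Rigidity in every case follows by unwinding the Cauchy-Schwarz and Newton-inequality equality conditions, which force the principal curvatures of $\pt U$ to coincide pointwise and hence $\pt U$ to be a round sphere --- centered at the origin precisely in case \eqref{bound2}, where the chosen test function depends explicitly on the radial function $r(\omega)$.
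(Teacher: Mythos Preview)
Your proposal is correct and follows essentially the same route as the paper. The paper's proof is organized as: (i) use Payne's observation $\xi_1\le \mathrm{Cap}(\pt U)/|\pt U|$ (which you also identify), and (ii) invoke known sharp upper bounds for $\mathrm{Cap}(\pt U)$ from the literature for cases \eqref{bound1}--\eqref{bound4} and \eqref{bound6}, while proving a new capacity bound (Theorem~\ref{thm12}) for case \eqref{bound5}. You instead sketch the derivations of those capacity bounds directly via explicit test functions, but the arguments you outline---Steiner formula for \eqref{bound1}, the radial profile $(r(\omega)/|x|)^{n-2}$ for \eqref{bound2}, IMCF with the Newton inequality for \eqref{bound3}--\eqref{bound4}, and the Agostiniani--Mazzieri monotonicity for \eqref{bound6}---are precisely the ones underlying the cited results. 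For \eqref{bound5}, your differential inequality $\Phi'(t)\le 16\pi-\Phi(t)$ is exactly the monotonicity of the \emph{modified} Hawking mass $\widetilde m_H(\Sigma_t)=(|\Sigma_t|/16\pi)(1-\Phi(t)/16\pi)$ that the paper introduces; both formulations yield $\Phi(t)\le 16\pi+(\Phi_0-16\pi)e^{-t}$ and then the same Cauchy--Schwarz optimization. Your acknowledgment that the weak Huisken--Ilmanen machinery is needed for \eqref{bound3} and \eqref{bound5} matches the paper's treatment.
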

The proof for Theorem~\ref{thm1} is the combination of the observation
\begin{equation}
\xi_1\leq \frac{\mathrm{Cap}(\pt U)}{|\pt U|}
\end{equation}
by Payne \cite{Pay56}, and various estimates for the electrostatic capacity $\mathrm{Cap}(\pt U)$ for the boundary $\pt U$. Here the electrostatic capacity of $\pt U$ is defined as
\begin{align*}
\mathrm{Cap}(\pt U):&=\inf \left\{ \int_{\R^n} |\nabla f|^2 dx: f \in C_c^{\infty}(\R^n), f \geq 1\text{ on }\R^n\setminus U \right\}\\
&=\int_U |\nabla u|^2 dx,
\end{align*}
where the infimum is achieved by the so-called electrostatic capacitary potential $u\in E^1(U)$, i.e., the unique solution in $E^1(U)$ of the PDE
\begin{equation*}
\begin{cases}
\Delta u=0,\text{ in }U,\\
u=1, \text{ on }\pt U.
\end{cases}
\end{equation*}
We collect known estimates for $\mathrm{Cap}(\pt U)$ in Theorem~\ref{thm10} of Section~\ref{sec4}. Our new estimate for $\mathrm{Cap}(\pt U)$ is in Theorem~\ref{thm12}. For the proof of Theorem~\ref{thm12}, we employ a classical approach which may go back to \cite{PS51,Sze31} and has been applied successfully in some papers, e.g., in \cite{FS14,Xiao16,Xiao17,BM08,LXZ11,Xiao17a}. In particular, we mainly follow the argument in Bray and Miao's \cite{BM08} where the weak inverse mean curvature flow developed by Huisken and Ilmanen and the monotonicity of the Hawking mass of the evolving surfaces along this flow \cite{HI01} comprise two of the key tools. The proof of our Theorem~\ref{thm12} relies on the introduction of a modified Hawking mass of the evolving surfaces which is inspired by the work \cite{HW15}.

Next, for the second exterior Steklov eigenvalue $\xi_2$, we obtain the following upper bound by use of good test functions in the min-max variational characterization for $\xi_2$. Our result generalizes the corresponding one in Payne's \cite{Pay56} for the $3$-dimensional case to the higher dimensional case.
\begin{thm}\label{thm2}
Let $U\subset \R^n$ ($n\geq 3$) be an exterior domain with smooth boundary. Assume that the virtual mass potential $w[e]$ (see \eqref{v-m-p} for its definition) in the direction $e\in \SS^{n-1}$ satisfies
\begin{equation}\label{eq-assumption}
\int_{\pt U}w[e]da=0,\quad \forall e\in \SS^{n-1}.
\end{equation}
Then we have
\begin{equation}\label{bound7}
\xi_2\leq \max \left\{\frac{ \mathrm{Cap}(\pt U)}{|\pt U|},\frac{(n-1)|\pt U|}{nV}\right\},
\end{equation}
where $\mathrm{Cap}(\pt U)$ is the electrostatic capacity of $\pt U$ and $V$ denotes the volume of the set $\R^n\setminus U$.
\end{thm}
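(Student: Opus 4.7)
The plan is to invoke the Courant--Fischer min-max characterization
\[
\xi_2 = \min_{\substack{V\subset E^1(U)\\ \dim V = 2}}\,\max_{\varphi\in V\setminus\{0\}}\frac{\int_U|\nabla\varphi|^2\,dx}{\int_{\pt U}\varphi^2\,da},
\]
and to exhibit a two-dimensional subspace on which the Rayleigh quotient is uniformly dominated by the right-hand side of \eqref{bound7}. The natural candidate is $V = \mathrm{span}(u, w[e])$, where $u$ is the electrostatic capacity potential and $w[e]$ is the virtual mass potential in some direction $e\in\SS^{n-1}$ to be chosen. By construction $R(u) = \mathrm{Cap}(\pt U)/|\pt U|$, which already accounts for the first term inside the max in \eqref{bound7}.

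First I would verify that $u$ and $w[e]$ are orthogonal in both the boundary $L^2$ inner product and the Dirichlet inner product. The $L^2(\pt U)$-orthogonality is immediate from $u\equiv 1$ on $\pt U$ together with the standing hypothesis $\int_{\pt U}w[e]\,da = 0$. For the Dirichlet orthogonality, I apply Green's identity on the truncation $U_R = U\cap B_R$: using $u|_{\pt U}=1$ and $\pt w[e]/\pt\nu = -\langle e,\nu\rangle$, the boundary contribution on $\pt U$ collapses to $\int_{\pt U}\langle e,\nu\rangle\,da$, which vanishes by the divergence theorem applied to the constant field $e$ on the bounded set $\R^n\setminus U$; the contribution on $\pt B_R$ decays as $R\to\infty$ thanks to the $O(r^{-(n-2)})$ and $O(r^{-n})$ decay of $u$ and $\nabla w[e]$, respectively. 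With both orthogonalities in hand, the maximum Rayleigh quotient on $V$ is exactly $\max\{R(u),R(w[e])\}$, and the min-max yields $\xi_2\leq\max\{\mathrm{Cap}(\pt U)/|\pt U|,\,R(w[e])\}$ for every $e\in\SS^{n-1}$.

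The heart of the argument is then to choose $e$ such that $R(w[e]) \leq (n-1)|\pt U|/(nV)$. I plan an averaging argument: set $w_i = w[e_i]$ for a standard basis $\{e_i\}$ of $\R^n$ and form the vector $W = (w_1,\ldots,w_n)$. The weighted-average inequality gives
\[
\min_i R(w_i) \;\leq\; \frac{\sum_i \int_U|\nabla w_i|^2\,dx}{\sum_i \int_{\pt U}w_i^2\,da} \;=\; \frac{\int_{\pt U}\langle W,\nu\rangle\,da}{\int_{\pt U}|W|^2\,da},
\]
where the second equality comes from Green's identity together with the boundary conditions $\pt w_i/\pt\nu = -\nu_i$. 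Applying Cauchy--Schwarz $\int_{\pt U}\langle W,\nu\rangle\,da \leq |\pt U|^{1/2}\bigl(\int_{\pt U}|W|^2\,da\bigr)^{1/2}$ to the numerator reduces the desired estimate to the sharp geometric inequality
\[
\int_{\pt U}|W|^2\,da \;\geq\; \frac{n^2V^2}{(n-1)^2\,|\pt U|},
\]
which is saturated on centered balls.

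The main obstacle will be establishing this last geometric inequality. I expect to prove it via a P\'olya--Szeg\H{o}-type trace estimate for the virtual mass tensor, namely $\sum_i\int_U|\nabla w_i|^2\,dx \geq nV/(n-1)$, combined with one more Cauchy--Schwarz step $(\int_{\pt U}\langle W,\nu\rangle\,da)^2 \leq |\pt U|\int_{\pt U}|W|^2\,da$. The zero-mean hypothesis $\int_{\pt U}w[e]\,da = 0$ should play the essential role of eliminating the constant (monopole) mode of $w[e]$ and pinning down centered balls as the unique equality case. Once this inequality is in hand, assembling the pieces above yields the bound \eqref{bound7}.
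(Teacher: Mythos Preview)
Your plan is correct and essentially matches the paper's proof: same two-dimensional test space $\mathrm{span}\{u,w[e]\}$, same two orthogonalities, and the same reliance on the virtual mass lower bound $nW_{ave}=\sum_i\int_U|\nabla w_i|^2\,dx\ge nV/(n-1)$ (proved in the paper as a corollary to a Schiffer--Walpole-type matrix inequality, and requiring \emph{no} zero-mean hypothesis---that hypothesis enters only through the $L^2(\pt U)$-orthogonality you already identified). The paper organizes the endgame a bit more directly, applying Cauchy--Schwarz once per direction to obtain $\xi_2\int_U|\nabla w_i|^2\,dx\le \int_{\pt U}\nu_i^2\,da$ and then summing over $i$; your route through $\min_i R(w_i)\le A/B$ and the intermediate inequality on $\int_{\pt U}|W|^2\,da$ is equivalent (your two Cauchy--Schwarz applications are in fact the same inequality $A^2\le |\pt U|\,B$ read twice, collapsing to $A/B\le |\pt U|/A$).
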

\begin{rem}
The virtual mass corresponding to the domain $U$ is an important physical quantity; see e.g. \cite{SS49,AK07} for an introduction. The assumption \eqref{eq-assumption} indicates some kind of symmetry of the domain $U$. We do not know exactly for which domains it is satisfied. See Remark~\ref{rem-Payne} for a further comment on it.
\end{rem}
When $\pt U$ is mean convex and star-shaped, it was proved in \cite{Xiao17} (see Theorem~\ref{thm10} in Section~\ref{sec4}) that
\begin{equation}\label{eq-capacity}
\mathrm{Cap}(\pt U)\leq \frac{n-2}{n-1}\int_{\pt U} \sigma_1 da.
\end{equation}
If we assume further that $\pt U$ is convex, then we are allowed to use the Alexandrov--Fenchel inequality (see \cite{Sch14})
\begin{equation}\label{eq-AF}
\frac{n}{n-1}\int_{\pt U} \sigma_1 da\leq \frac{|\pt U|^2}{V}.
\end{equation}
Therefore by combining \eqref{eq-capacity}, \eqref{eq-AF}, Theorem~\ref{thm2} and Theorem~\ref{thm1}, we immediately conclude the following sharp upper bounds for $\xi_1$ and $\xi_2$.
\begin{corr}\label{corr1}
Assumptions are as in Theorem~\ref{thm2}. Suppose further that $\pt U$ is convex. Then we have
\begin{equation}\label{eq-upper}
\xi_1\leq \frac{(n-2)|\pt U|}{nV},\quad \xi_2\leq \frac{(n-1)|\pt U|}{nV}.
\end{equation}
Both equalities hold if $U$ is the exterior domain of a round ball.
\end{corr}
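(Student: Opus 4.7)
The proof is a direct synthesis of the four quoted ingredients, so the plan is essentially bookkeeping: chain together \eqref{eq-capacity}, \eqref{eq-AF}, Theorem~\ref{thm1}(3)--(4), and Theorem~\ref{thm2} in the right order.

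First I would observe that convexity of $\partial U$ implies in particular mean convexity and star-shapedness (and also outer-minimizing, since a convex hypersurface in $\R^n$ minimizes area among homologous competitors in $U$). Hence the hypotheses for bound \eqref{bound3} (or equivalently \eqref{bound4}) of Theorem~\ref{thm1} and for the capacity estimate \eqref{eq-capacity} are both satisfied. Combining bound \eqref{bound4} with the Alexandrov--Fenchel inequality \eqref{eq-AF} yields
\begin{equation*}
\xi_1 \leq \frac{n-2}{(n-1)|\pt U|}\int_{\pt U}\sigma_1\,da \leq \frac{n-2}{(n-1)|\pt U|}\cdot\frac{n-1}{n}\cdot\frac{|\pt U|^2}{V} = \frac{(n-2)|\pt U|}{nV},
\end{equation*}
which is the first inequality in \eqref{eq-upper}.

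For the bound on $\xi_2$, I would apply \eqref{eq-capacity} followed by \eqref{eq-AF} to control the capacity term appearing in Theorem~\ref{thm2}:
\begin{equation*}
\frac{\mathrm{Cap}(\pt U)}{|\pt U|} \leq \frac{n-2}{(n-1)|\pt U|}\int_{\pt U}\sigma_1\,da \leq \frac{(n-2)|\pt U|}{nV} < \frac{(n-1)|\pt U|}{nV}.
\end{equation*}
Hence the maximum in \eqref{bound7} is realized by its second argument, and Theorem~\ref{thm2} immediately gives $\xi_2 \leq (n-1)|\pt U|/(nV)$.

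Finally, for the equality statement it suffices to verify both bounds are attained on $U=\R^n\setminus\overline{B_R}$ by direct computation: with $|\pt U|=n\omega_n R^{n-1}$ and $V=\omega_n R^n$, one has $|\pt U|/(nV)=1/R$, while the first two exterior Steklov eigenvalues of the ball are $\xi_1=(n-2)/R$ and $\xi_2=(n-1)/R$ (eigenfunctions $|x|^{2-n}$ and $x_i|x|^{-n}$ respectively). The only mild obstacle I foresee is checking that the centered ball actually satisfies the symmetry hypothesis \eqref{eq-assumption} of Theorem~\ref{thm2}, which however follows at once because the virtual mass potential $w[e]$ on a centered ball is an odd linear combination of the angular coordinates and thus integrates to zero over $\pt B_R$ for every direction $e$.
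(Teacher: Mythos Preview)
Your proof is correct and follows exactly the route indicated in the paper, which simply says the corollary follows ``by combining \eqref{eq-capacity}, \eqref{eq-AF}, Theorem~\ref{thm2} and Theorem~\ref{thm1}''; you have merely spelled out the chain of inequalities explicitly and added the helpful observation that the capacity term in \eqref{bound7} is strictly dominated by the second argument since $n-2<n-1$. Your verification that the centered ball satisfies \eqref{eq-assumption} via the oddness of $w[e]$ is a nice touch that the paper leaves implicit.
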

\begin{rem}
When $n=3$, Corollary~\ref{corr1} can be obtained by combining Payne's work \cite{Pay56} and Schiffer's work \cite{Sch57}. But for $n\geq 4$, new techniques, e.g. the inverse mean curvature flow (see \cite{Ger90,Urb90}), are required first to derive \eqref{eq-capacity}, in order to prove Corollary~\ref{corr1}.
\end{rem}
\begin{rem}\label{rem-conj}
It is worth mentioning that \eqref{eq-upper} was conjectured to be true for any smooth exterior domain $U$ by Payne in \cite{Pay56}.
\end{rem}

Last, we give some remarks on the problem to get sharp upper bounds for $\xi_1$, towards the conjecture mentioned in Remark~\ref{rem-conj}. The natural idea is to use suitable test functions in the variational characterization for $\xi_1$, i.e.,
\begin{equation}\label{eq-RQ}
\xi_1\leq \frac{\int_{U}|\nabla f|^2 dx}{\int_{\pt U} f^2da},\quad f\in E^1(U).
\end{equation}
Let $\Omega:=\R^n\setminus \overline{U}$. Assume the origin $O\in \Omega$. Then the first candidate is
\begin{equation}
f_1(x)=\frac{1}{|x|^{n-2}}.
\end{equation}
The second candidate is the gravitational potential for $\Omega$, i.e.,
\begin{equation}
f_2(x)=-\frac{1}{(n-2)\omega_{n-1}}\int_\Omega \frac{1}{|y-x|^{n-2}}dy,
\end{equation}
where $\omega_{n-1}=|\SS^{n-1}|$. The third candidate is the single layer potential for $\pt \Omega$, i.e.,
\begin{equation}
f_3(x)=-\frac{1}{(n-2)\omega_{n-1}}\int_{\pt \Omega} \frac{1}{|y-x|^{n-2}}dy.
\end{equation}
Note that all $f_i$ ($i=1,2,3$) are harmonic outside $\Omega$ and have the right decay rate at $\infty$. Moreover, when $\Omega$ is a Euclidean ball, all of them are indeed the first Steklov eigenfunctions. So the question is how to estimate the Rayleigh quotient in \eqref{eq-RQ} for $f_i$ ($i=1,2,3$).

The organization of the paper is as follows. In Section~\ref{sec2} we review the setting of the exterior Steklov eigenvalue problem and discuss the case of the exterior domain of a round ball. In Section~\ref{sec3} we prove the lower bound for the first eigenvalue. In Section~\ref{sec4} we obtain the various upper bounds for the first eigenvalue. In the next Section~\ref{sec5} we discuss an upper bound for the second eigenvalue. In the last section we present some auxiliary and related results used in the paper.

\section{Preliminaries}\label{sec2}

For the setting of the exterior Steklov eigenvalue problem, we mainly follow the work \cite{AH14}.

Let $U$ be an exterior domain in $\R^n$ ($n\geq 3$). In other words, $U$ is a non-empty, open, connected set in $\R^n$ such that its complement $\R^n\setminus U$ is a non-empty compact set. Moreover, we assume that the origin $O\in \Omega:=\R^n\setminus \overline{U}$ and the boundary $\pt U$ is the union of finitely many disjoint, closed, Lipschitz hypersurfaces, each of finite hypersurface area. Let $u$ be a Lebesgue measurable extended real-valued function on $U$. We say that $u$ \emph{decays at infinity} if for each $c>0$, the set
\begin{equation*}
S_c(u):=\{x\in U:|u(x)|\geq c\}
\end{equation*}
has finite Lebesgue measure. Furthermore, we say that $u$ has \emph{finite energy}, if $u$ decays at infinity, $u\in L^1(U_R)$ for each $R>R_b$ and $|\nabla u|\in L^2(U)$. Here $U_R:=U\cap B_R$ and $R_b:=\sup \{|x|:x\in \pt U\}$ with $B_R$ denoting the Euclidean ball of the radius $R$. Now we define $E^1(U)$ to be the set of all functions having finite energy on $U$. The set $E^1(U)$ is the main space of functions we will use in this paper.

Next we introduce the harmonic function in $E^1(U)$. A function $u\in E^1(U)$ is called \emph{harmonic} if it satisfies
\begin{equation*}
\int_U \nabla u\cdot \nabla \varphi dx =0
\end{equation*}
for all $\varphi \in C_c^1(U)$. Let $\mathcal{H}(U)$ be the subspace of $E^1(U)$ of all harmonic functions. In \cite[Section~12]{AH14} and \cite[Section~5]{AH14}, respectively, the existence and uniqueness of solutions to a harmonic Neumann problem and a harmonic Dirichlet problem, respectively, on $U$ is proved. For later use, we state them here as follows.
\begin{prop}[Thm.~12.1 in \cite{AH14}]\label{N-p}
For any $\eta\in L^q(\pt U)$ with $q\geq 2(n-1)/n$, there exists a unique $u\in \mathcal{H}(U)$ such that $\nabla_\nu u=\eta$ on $\pt U$.
\end{prop}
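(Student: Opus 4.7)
The plan is to prove existence by setting up a variational problem on the Hilbert space $E^1(U)$ and applying the Riesz representation theorem, with uniqueness following from a standard energy argument. The first step is to equip $E^1(U)$ with the Dirichlet inner product $(u,v):=\int_U \nabla u\cdot\nabla v\, dx$ and verify that this makes $E^1(U)$ a Hilbert space. The decay-at-infinity condition in the definition of $E^1(U)$, together with $n\geq 3$, yields a Hardy--Sobolev-type inequality
\begin{equation*}
\|u\|_{L^{2n/(n-2)}(U)}\leq C\|\nabla u\|_{L^2(U)},\qquad u\in E^1(U),
\end{equation*}
so that the Dirichlet inner product is positive definite (a function with vanishing gradient must be constant and the constant is forced to $0$ by the decay condition). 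Completeness follows by a diagonal/local compactness argument combined with weak compactness of gradients.

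The second step is a boundary trace inequality. On a bounded collar neighborhood $V$ of $\pt U$ inside $\overline{U}$, the standard Sobolev trace theorem on Lipschitz domains gives $\|u\|_{L^{p}(\pt U)}\leq C\|u\|_{H^1(V)}$ with $p=2(n-1)/(n-2)$, and the Hardy--Sobolev inequality above controls $\|u\|_{L^2(V)}$ by $\|\nabla u\|_{L^2(U)}$. Combining these,
\begin{equation*}
\|u\|_{L^{p}(\pt U)}\leq C\|u\|_{E^1(U)},\qquad p=\frac{2(n-1)}{n-2}.
\end{equation*}
The H\"older conjugate of $p$ is exactly $q_0=2(n-1)/n$, so for $\eta\in L^q(\pt U)$ with $q\geq q_0$ the linear functional $L(v):=\int_{\pt U}\eta\, v\, da$ is bounded on $E^1(U)$.

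By the Riesz representation theorem there is a unique $u\in E^1(U)$ with
\begin{equation*}
\int_U \nabla u\cdot \nabla v\, dx = \int_{\pt U}\eta\, v\, da\quad \text{for every } v\in E^1(U).
\end{equation*}
Restricting $v$ to $C_c^1(U)$ shows $u$ is harmonic in the weak sense used in this paper, so $u\in \mathcal{H}(U)$; allowing $v$ to have arbitrary boundary trace identifies $\nabla_\nu u$ with $\eta$ in the appropriate distributional sense on $\pt U$. For uniqueness, if two solutions have the same Neumann data then their difference $w\in \mathcal{H}(U)$ satisfies $\int_U |\nabla w|^2 dx=0$ after a cut-off-based integration by parts, so $w$ is constant, and $w\in E^1(U)$ forces $w\equiv 0$.

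The main obstacle is giving a rigorous meaning to the identity $\nabla_\nu u=\eta$ on $\pt U$ when $u$ is only known a priori as a weak solution. This requires performing integration by parts on the unbounded domain $U$ through an exhaustion by $U_R = U\cap B_R$ together with smooth cut-offs whose error terms at infinity are controlled by the Hardy--Sobolev estimate, and then invoking boundary regularity theory for the Neumann problem with $L^q$ data on Lipschitz hypersurfaces to upgrade the weak identity to a genuine trace identification. This last step is precisely where the Lipschitz regularity of $\pt U$ and the threshold $q\geq 2(n-1)/n$ become essential.
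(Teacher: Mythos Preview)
The paper does not prove this proposition; it merely quotes it as Theorem~12.1 of \cite{AH14}. Your variational argument---equipping $E^1(U)$ with the Dirichlet inner product, using the Hardy--Sobolev inequality on the exterior domain to make it a Hilbert space, invoking the trace embedding $E^1(U)\hookrightarrow L^{2(n-1)/(n-2)}(\pt U)$ so that $v\mapsto\int_{\pt U}\eta\,v\,da$ is bounded exactly when $q\geq 2(n-1)/n$, and then applying Riesz representation---is the standard route and is essentially how Auchmuty--Han establish the result in the cited reference; your outline is correct, including the identification of the critical exponent and the weak interpretation of the Neumann condition.
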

\begin{prop}[Thm.~5.1 in \cite{AH14} and the paragraph following it]\label{D-p}
For any $f\in H^{1/2}(\pt U)$, there exists a unique $u\in \mathcal{H}(U)$ such that $u=f$ in the sense of traces on $\pt U$.
\end{prop}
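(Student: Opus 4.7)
The plan is a standard variational argument: lift the boundary data to a finite-energy function, reduce to a homogeneous Dirichlet problem, and apply the Riesz representation theorem. First I would invoke a trace-extension theorem for Lipschitz domains to produce $F\in H^1(\R^n)$ of compact support with $F|_{\pt U}=f$ in the trace sense and $\|F\|_{H^1}\leq C\|f\|_{H^{1/2}(\pt U)}$; its restriction to $U$ lies in $E^1(U)$ because compactly supported functions trivially decay at infinity.

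Next, I would equip the completion $E^1_0(U)$ of $C_c^\infty(U)$ under the Dirichlet inner product $(u,v):=\int_U \n u\cdot\n v\,dx$ with its Hilbert structure. The hypothesis $n\geq 3$ enters crucially through the Hardy--Sobolev inequality $\int_U u^2/|x|^2\,dx\leq C\int_U |\n u|^2\,dx$, which upgrades the Dirichlet seminorm to a genuine norm and guarantees that elements of $E^1_0(U)$ decay at infinity. I would then seek $v\in E^1_0(U)$ satisfying
\[
\int_U \n v\cdot \n \varphi\,dx=-\int_U \n F\cdot \n \varphi\,dx,\quad \forall\,\varphi\in C_c^1(U).
\]
Since the right-hand side is a bounded linear functional on $E^1_0(U)$ by Cauchy--Schwarz, the Riesz representation theorem produces a unique such $v$. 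The candidate solution $u:=v+F$ is then weakly harmonic in $U$, lies in $E^1(U)$, and has trace $f$ on $\pt U$ because $v$ has vanishing trace by construction.

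For uniqueness, if $u_1,u_2\in \mathcal{H}(U)$ share the same trace, then $w:=u_1-u_2\in E^1(U)$ has vanishing trace, so a cutoff-and-approximation argument places it in $E^1_0(U)$; testing the harmonicity equation against $w$ itself yields $\int_U |\n w|^2\,dx=0$, and Hardy's inequality together with decay at infinity then forces $w\equiv 0$.

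The main obstacle I anticipate is the analytic setup of $E^1_0(U)$: one must prove that the Dirichlet integral defines a complete inner product equivalent to a weighted $L^2$-type norm whose elements actually decay at infinity, which is precisely where $n\geq 3$ becomes indispensable. A secondary subtlety is the rigorous extension of the trace operator from $H^1(U)$ to the larger space $E^1(U)$ so as to make the identification $u|_{\pt U}=f$ fully meaningful; once both points are in place, the existence-uniqueness argument is standard Lax--Milgram.
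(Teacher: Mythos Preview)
Your variational approach is sound and is the standard route to such exterior Dirichlet problems; the paper itself gives no proof of this proposition, quoting it directly from \cite{AH14}. The ingredients you identify---trace extension on Lipschitz boundaries, completeness of $E^1_0(U)$ under the Dirichlet norm via Hardy or Sobolev for $n\geq 3$, Riesz/Lax--Milgram for existence, and the density of $C_c^\infty(U)$ in the zero-trace subspace for uniqueness---are exactly what is needed, and the two ``obstacles'' you flag (that the Dirichlet seminorm is a genuine norm whose elements decay at infinity, and that the trace operator extends to $E^1(U)$) are precisely the points \cite{AH14} addresses in setting up the space $E^1(U)$.
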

Here the fractional Sobolev space $H^{1/2}(\pt U)$ is defined to be the set of functions $f\in L^2(\pt U)$ satisfying
\begin{equation*}
\int_{\pt U}\int_{\pt U}\frac{|f(x)-f(y)|^2}{|x-y|^n} da(x)da(y)<+\infty.
\end{equation*}

Thanks to the work \cite{AH14}, the eigenvalues of the exterior Steklov problem admit the following variational characterization
\begin{equation*}
\xi_i=\inf_{\substack{\varphi\in E^1(U)\\ \int_{\pt U} \varphi \varphi_jda=0,\; j=1,2,\dots,i-1}}\frac{\int_{U}|\nabla \varphi|^2 dx}{\int_{\pt U} \varphi^2da},\quad i\geq 1,
\end{equation*}
where $\varphi_j$ ($j=1,2,\dots,i-1$) are the first $(i-1)$ eigenfunctions.

Next we discuss the exterior Steklov eigenvalue problem on the special domain, the exterior domain of a Euclidean ball.
\begin{exam}
For the case of the Euclidean ball, i.e. $\R^n\setminus \overline{U}=B_R$ for some $R>0$, the eigenvalues and the corresponding eigenfunctions can be computed explicitly. In fact, by separation of variables, any exterior Steklov eigenfunction $\varphi$ can be expressed as $r^{2-n-m}\cdot \omega_m(p)$ in the polar coordinate, where $\omega_m(p)$ ($p\in \SS^{n-1}$) is a spherical harmonic on $\SS^{n-1}$ of degree $m\geq 0$.

For readers' convenience, let us recall some knowledge on spherical harmonics. Given a spherical harmonic $\omega_m$ on $\SS^{n-1}$ of degree $m\geq 0$, it can be regarded as the restriction on $\SS^{n-1}$ of a harmonic homogeneous polynomial $\tilde{\omega}_m$ on $\R^n$ of the same degree $m$. For each $m\geq 0$, let $\mathcal{D}_m$ be the space of harmonic homogeneous polynomials on $\R^n$ of degree $m$ and $\mu_m$ be the dimension of $\mathcal{D}_m$. For example, we know
\begin{align*}
&\mathcal{D}_0=span\{1\},\quad \mu_0=1,\\
&\mathcal{D}_1=span\{x_i,\:i=1,2,\dots,n\},\quad \mu_1=n,\\
&\mathcal{D}_2=span\{x_ix_j,\: x_1^2-x_k^2,\:1\leq i<j\leq n,\: 2\leq k\leq n\},\quad \mu_2=\frac{n^2+n-2}{2},
\end{align*}
and $\mu_m=C_{n+m-1}^{n-1}-C_{n+m-3}^{n-1}$ for $m\geq 2$. See \cite{ABR92} for basic facts concerning $\mathcal{D}_m$ and $\mu_m$. For a spherical harmonic $\omega_m$ on $\SS^{n-1}$ of degree $m\geq 0$, one of its basic properties is that $-\Delta_{\SS^{n-1}}\omega_m=\tau_m\omega_m$ with $\tau_m=m(n-2+m)$.

Therefore, back to our exterior Steklov problem for $U=\R^n\setminus \overline{B_R}$, we can list its first $(n+1)$ eigenvalues and their corresponding eigenfunctions as follows:
\begin{align*}
&\xi_1=\frac{n-2}{R},\quad \varphi_1(x)=\frac{1}{|x|^{n-2}},\\
&\xi_2=\cdots=\xi_{n+1}=\frac{n-1}{R}, \quad \varphi_i(x)=\frac{x_{i-1}}{|x|^n},\quad i=2,3,\dots,n+1.
\end{align*}

\end{exam}

Using the above example, we can get the following useful fact concerning the decay rate of functions in $
\mathcal{H}(U)$.
\begin{prop}\label{prop-decay-rate}
Let $u\in \mathcal{H}(U)$. Then we have the decay rate for $u(x)$ and its derivatives
\begin{equation}
u(x)=O(|x|^{2-n}),\quad \nabla^k u(x)=O(|x|^{2-n-k}),\quad k\geq 1,\text{ as } x\to \infty.
\end{equation}
\end{prop}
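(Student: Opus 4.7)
The plan is to combine a spherical-harmonic expansion of $u$ outside a large ball with the finite-energy hypothesis, which annihilates every growing mode and leaves only terms that decay at the advertised rate. Fix $R_0>R_b$, so that $u$ is a classical harmonic function on $\R^n\setminus\overline{B_{R_0}}\subset U$, and in polar coordinates $x=rp$ expand
$$
u(rp)=\sum_{m=0}^{\infty}\sum_{k=1}^{\mu_m} u_{m,k}(r)\,Y_{m,k}(p),
$$
where $\{Y_{m,k}\}$ is an $L^2(\SS^{n-1})$-orthonormal basis of spherical harmonics of degree $m$. Using $-\Delta_{\SS^{n-1}}Y_{m,k}=\tau_m Y_{m,k}$ with $\tau_m=m(n-2+m)$ and separation of variables, the equation $\Delta u=0$ reduces to the Euler ODEs $r^2u_{m,k}''+(n-1)r\,u_{m,k}'-\tau_m u_{m,k}=0$, whose general solutions are $u_{m,k}(r)=a_{m,k}r^m+b_{m,k}r^{-(n-2+m)}$.

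Next I would kill all the coefficients $a_{m,k}$. Orthogonality of the spherical harmonics yields the identity
$$
\int_{|x|>R_0}|\nabla u|^2\,dx=\sum_{m,k}\int_{R_0}^{\infty}\left(|u_{m,k}'(r)|^2+\tau_m r^{-2}|u_{m,k}(r)|^2\right)r^{n-1}\,dr.
$$
For $m\geq 1$, a nonzero $a_{m,k}$ produces an integrand comparable to $r^{2m-2+n-1}$ at infinity, which is not integrable and contradicts $|\nabla u|\in L^2(U)$. For $m=0$ the constant mode contributes nothing to the gradient energy, but a nonzero $a_{0,1}$ would force $u$ to tend to a nonzero constant at infinity, making $S_c(u)$ have infinite measure for all sufficiently small $c$, which violates the decay-at-infinity condition built into $E^1(U)$. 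Hence only the terms $b_{m,k}r^{-(n-2+m)}Y_{m,k}$ remain, and $u(x)=O(|x|^{2-n})$ as $|x|\to\infty$ follows once one verifies absolute convergence of the surviving series (Cauchy--Schwarz plus $L^2$-summability of the coefficients on a fixed sphere).

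Finally, for the higher derivative estimates I prefer a scaling argument to differentiating the series. For large $R$, the rescaled function $\tilde u(y):=R^{n-2}u(Ry)$ is harmonic and uniformly bounded on the annulus $\{1/2<|y|<2\}$ by the previous step, so the interior derivative estimates for harmonic functions give $\|\nabla^k \tilde u\|_{L^\infty(\{|y|=1\})}\leq C_k$, which rescales back to $|\nabla^k u(x)|\leq C_k|x|^{2-n-k}$ at $|x|=R$. The step demanding the most care is the justification of the series manipulations---absolute and uniform convergence of the spherical-harmonic expansion on compact subsets of $\{r>R_0\}$ and the term-by-term integration of $|\nabla u|^2$---but these are classical facts about spherical harmonics \cite{ABR92}. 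Once those are in hand, the proof is really driven by the simple algebraic dichotomy: growing modes fail either the $L^2$-gradient condition ($m\geq 1$) or the decay-at-infinity condition ($m=0$).
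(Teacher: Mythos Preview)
Your argument is correct and parallels the paper's in spirit: both hinge on a spherical-harmonic decomposition of $u$ outside a large ball. The paper's version is shorter: it expands $u|_{\partial B_R}$ in the exterior Steklov eigenfunctions $\varphi_k$ of $\R^n\setminus\overline{B_R}$ (which are precisely the decaying modes $r^{2-n-m}\omega_m$) and then invokes the uniqueness of the Dirichlet problem in $\mathcal{H}(\R^n\setminus\overline{B_R})$ (Proposition~\ref{D-p}) to identify $u$ with $\sum_k c_k\varphi_k$ on the exterior. Thus the growing modes are eliminated by that black-box uniqueness result rather than by your explicit energy computation. Your route has the merit of being self-contained---it makes transparent exactly where each ingredient of the $E^1(U)$ hypothesis (the $L^2$ gradient for $m\geq 1$, the decay-at-infinity for $m=0$) is actually used---and your scaling argument for the higher derivatives is cleaner than simply asserting that the conclusion follows from the explicit form of the $\varphi_i$, which is all the paper says.
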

\begin{proof}
Take a large ball $B_R$ containing the boundary $\pt U$ of the domain $U$. As in the above example, let $\{\varphi_i(x)\}_{i=1}^\infty$ be the family of the exterior Steklov eigenfunctions for $\R^n\setminus \overline{B_R}$ such that its restriction on $\pt B_R$ forms an orthonormal basis of $L^2(\pt B_R)$, i.e.,
\begin{equation}
\int_{\pt B_R} \varphi_i\varphi_jda=\delta_{ij},\quad i,j=1,2,\dots.
\end{equation}
Then we can decompose the restriction on $\pt B_R$ of $u\in \mathcal{H}(U)$ as
\begin{equation*}
u|_{\pt B_R}=\sum_{k\geq 1}c_k\varphi_k|_{\pt B_R},\quad c_k\in \R,\quad k\geq 1.
\end{equation*}
We claim that
\begin{equation*}
u =\sum_{k\geq 1}c_k\varphi_k,\text{ in }\R^n\setminus B_R.
\end{equation*}
In fact, both $u$ and $\sum_{k\geq 1}c_k\varphi_k$ lie in $\mathcal{H}(\R^n\setminus \overline{B_R})$, and they admit the same boundary value on $\pt B_R$. Then Proposition~\ref{D-p} implies the claim.

By the claim and the expressions of $\varphi_i$ ($i\geq 1$) in the above example, the conclusion follows.

\end{proof}

\section{Proof of Theorem~\ref{thm0}}\label{sec3}

In this section we first prove a more general result.
\begin{thm}\label{thm9}
Let $U\subset \R^{n}$ ($n\geq 3$) be an exterior domain in the Euclidean space $\R^{n}$ with Lipschitz boundary. Let $\xi_1$ be the first exterior Steklov eigenvalue and $\varphi$ a corresponding eigenfunction. Assume that the vector field $\vec{P}(x)$ consisting of $n$ smooth functions $P_i(x)$, $i=1,2,\dots,n$, on $\overline{U}$ satisfies
\begin{equation}\label{eq-condition}
\lim_{R\to \infty}\int_{\pt B_R}\langle \varphi^2\vec{P},\nu \rangle da=0,
\end{equation}
where $B_R$ denotes the centered ball with the radius $R$, and
\begin{equation}\label{eq-M}
\mathrm{div}\vec{P}-|\vec{P}|^2\geq 0.
\end{equation}
Then we have
\begin{equation*}
\xi_1\geq \essinf_{\pt U} \langle \vec{P},\nu\rangle.
\end{equation*}
\end{thm}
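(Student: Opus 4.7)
The plan is to use the variational characterization of $\xi_1$ together with the inequality $|\nabla \varphi + \varphi \vec{P}|^2 \geq 0$ and integrate by parts, following the Protter-style approach mentioned in the introduction.

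First I would take $\varphi$ to be a first eigenfunction normalized so that $\int_{\pt U}\varphi^2\,da = 1$. Expanding the non-negative quantity $|\nabla\varphi + \varphi \vec{P}|^2$ pointwise on $U$ and integrating gives
\begin{equation*}
\int_U |\nabla\varphi|^2\,dx + \int_U \langle \nabla(\varphi^2),\vec{P}\rangle\,dx + \int_U \varphi^2|\vec{P}|^2\,dx \geq 0.
\end{equation*}
The middle term is the object I want to transform: rewriting $\langle \nabla(\varphi^2),\vec{P}\rangle = \mathrm{div}(\varphi^2 \vec{P}) - \varphi^2 \mathrm{div}\vec{P}$ and applying the divergence theorem on the bounded region $U\cap B_R$ yields two boundary contributions. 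On $\pt U$, the outward unit normal of $U\cap B_R$ equals $-\nu$ (since $\nu$ points into $U$), producing the term $-\int_{\pt U}\varphi^2\langle\vec{P},\nu\rangle\,da$. On $\pt B_R$ the contribution vanishes in the limit $R\to\infty$ thanks to the hypothesis \eqref{eq-condition}. Combining everything leads to
\begin{equation*}
\int_U|\nabla\varphi|^2\,dx \geq \int_{\pt U}\varphi^2\langle \vec{P},\nu\rangle\,da + \int_U \varphi^2\bigl(\mathrm{div}\vec{P} - |\vec{P}|^2\bigr)\,dx.
\end{equation*}
Hypothesis \eqref{eq-M} makes the interior integral non-negative, so it can be dropped, leaving $\int_U|\nabla\varphi|^2 \geq \bigl(\essinf_{\pt U}\langle\vec{P},\nu\rangle\bigr)\int_{\pt U}\varphi^2\,da$. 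Dividing by $\int_{\pt U}\varphi^2\,da$ and invoking \eqref{v-c} yields the desired bound on $\xi_1$.

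The main technical point, and the step I expect to require the most care, is justifying the integration by parts in the non-compact region $U$. The integrand $\varphi^2\vec{P}$ need not decay fast enough a priori for Stokes' theorem to apply directly at infinity; this is exactly what hypothesis \eqref{eq-condition} is designed to secure (and which can later be verified for the Theorem~\ref{thm0} choice $\vec{P} = (n-2)x/|x|^2$ using the $O(|x|^{2-n})$ decay of eigenfunctions from Proposition~\ref{prop-decay-rate}). Only Lipschitz regularity of $\pt U$ is assumed, so the integration by parts on $\pt(U\cap B_R)$ should be justified via a standard trace/approximation argument, which is routine given $\varphi \in E^1(U)$ and the smoothness of $\vec{P}$ on $\overline{U}$.
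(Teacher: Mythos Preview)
Your proof is correct and follows essentially the same approach as the paper: the paper writes the key step as an application of the Cauchy--Schwarz inequality to bound $|\nabla\varphi|^2 + 2\sum_i P_i\varphi_i\varphi + \mathrm{div}\vec{P}\,\varphi^2$ from below by $(\mathrm{div}\vec{P}-|\vec{P}|^2)\varphi^2$, which is exactly your pointwise inequality $|\nabla\varphi + \varphi\vec{P}|^2 \geq 0$ unpacked. The integration-by-parts over $U\cap B_R$ with the boundary term at infinity killed by \eqref{eq-condition} is handled identically.
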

\begin{proof}
First we note
\begin{align*}
\xi_1\int_{\pt U}\varphi^2da=\int_{U}|\nabla \varphi|^2dx.
\end{align*}
For the vector field $\vec{P}(x)$ on $\overline{U}$, we get by use of the divergence theorem
\begin{align*}
\xi_1&\int_{\pt U}\varphi^2da-\int_{\pt U} \varphi^2 \langle \vec{P},\nu\rangle da=\int_{U}|\nabla \varphi|^2dx-\int_{\pt U} \varphi^2 \langle \vec{P},\nu\rangle da\\
&=\int_{U}|\nabla \varphi|^2dx+\int_{B_R\setminus \overline{\Omega}} \mathrm{div}(\varphi^2\vec{P})dx-\int_{\pt B_R}\langle \varphi^2\vec{P},\nu \rangle da,
\end{align*}
where $B_R$ is a ball containing $\overline{\Omega}$ and recall $\Omega=\R^n\setminus \overline{U}$.

Using \eqref{eq-condition} and the Cauchy--Schwarz inequality, we see
\begin{align*}
\xi_1\int_{\pt U}\varphi^2da-\int_{\pt U} \varphi^2 \langle \vec{P},\nu\rangle da&=\int_{U}\left(|\nabla \varphi|^2+2\sum_i P_{i}\varphi_i\varphi+\mathrm{div}\vec{P}\varphi^2\right)dx\\
&\geq \int_U\left(\mathrm{div}\vec{P}-|\vec{P}|^2\right)\varphi^2 dx.
\end{align*}
Finally noting the condition \eqref{eq-M}, we finish the proof.
\end{proof}

Now for the proof of Theorem~\ref{thm0}, we choose
\begin{equation*}
P_i(x)=(n-2)\frac{x_i}{|x|^2},\quad i=1,2,\dots,n.
\end{equation*}
Then for \eqref{eq-condition}, we see that
\begin{align*}
\int_{\pt B_R}\langle \varphi^2\vec{P},\nu \rangle da&=(n-2)\frac{1}{R}\int_{\pt B_R}\varphi^2da\to 0, \text{as }R\to \infty,
\end{align*}
since $\varphi(x)=O(|x|^{2-n})$ as $x\to \infty$ in view of Proposition~\ref{prop-decay-rate}.

Next \eqref{eq-M} becomes
\begin{equation*}
\mathrm{div}\vec{P}-|\vec{P}|^2
=\frac{(n-2)^2}{|x|^2}-\frac{(n-2)^2}{|x|^2}= 0.
\end{equation*}

So we get the inequality in Theorem~\ref{thm0}.

Now assume the equality holds. Then from the proof above, we see that $\nabla \varphi=-\varphi \vec{P}$ and $\langle x,\nu\rangle/|x|^2$ is a constant along the boundary $\pt U$. Then $\varphi(x)=c|x|^{2-n}$, and along $\pt U$ the radial function $|x|$ is constant (considering the maximum and minimum points of $|x|$ on the $C^1$ boundary $\pt U$). So we conclude that $U$ is the exterior domain of a centered ball. So we finish the proof of Theorem~\ref{thm0}.

\section{Estimates for the electrostatic capacity and proof of Theorem~\ref{thm1}}\label{sec4}

For the proof of Theorem~\ref{thm1}, we employ the following observation by Payne \cite{Pay56}:
\begin{equation}
\xi_1\leq \frac{\mathrm{Cap}(\pt U)}{|\pt U|}.
\end{equation}
So it suffices to get nice upper bounds for $\mathrm{Cap}(\pt U)$. For that purpose we recall the following results.

\begin{thm}\label{thm10}
Let $U\subset \R^n$ ($n\geq 3$) be an exterior domain with smooth boundary.
\begin{enumerate}
  \item If $\pt U$ is convex, then (\cite{Sze31,PS51,Xiao17a,LX22})
  \begin{align*}
\mathrm{Cap}(\pt U)\leq  \frac{1}{\int_0^\infty (\sum_{i=0}^{n-1}\int_{\pt U} \sigma_i da \cdot t^i)^{-1}dt},
\end{align*}
with the equality if and only if $U$ is the exterior domain of a round ball.
\item If $\pt U$ is star-shaped with respect to the origin, then (\cite{PS51,LXZ11,LX22})
		\begin{equation}
			\mathrm{Cap}(\pt U)\leq   (n-2)\int_{\pt U} \langle x,\nu\rangle^{-1}da,
		\end{equation}
with the equality if and only if $U$ is the exterior domain of a round ball centered at the origin.
  \item If $\pt U$ is mean convex and outer-minimizing, then (\cite[Theorem~2~(a)]{FS14})
  \begin{align*}
  \mathrm{Cap}(\pt U)\leq \frac{n-2}{n-1}\int_{\pt U}\sigma_1 da,
  \end{align*}
  with the equality if and only if $U$ is the exterior domain of a round ball.
  \item If $\pt U$ is mean convex and star-shaped, then (\cite[Theorem~3.1]{Xiao17})
  \begin{align*}
  \mathrm{Cap}(\pt U)\leq \frac{n-2}{n-1}\int_{\pt U}\sigma_1 da,
  \end{align*}
  with the equality if and only if $U$ is the exterior domain of a round ball.
  \item If $n=3$ and $\pt U$ is connected, then (\cite[Corollary~2]{BM08})
  \begin{align}\label{eq-BM08}
  \mathrm{Cap}(\pt U)\leq \sqrt{\pi}\sqrt{|\pt U|}\left(1+\sqrt{\frac{\int_{\pt U}H^2da}{16\pi}}\right),\quad H:=\sigma_1,
  \end{align}
  with the equality if and only if $U$ is the exterior domain of a round ball.
  \item If $\pt U$ is only smooth, then (\cite[Corollary~4.4]{AM20})
   \begin{align*}
   \mathrm{Cap}(\pt U)\leq (n-2)|\pt U|\left(\frac{\int_{\pt U}|\sigma_1/(n-1)|^{(2n-3)/(n-1)}da}{|\pt U|}\right)^{(n-1)/(2n-3)},
   \end{align*}
   with the equality if and only if $U$ is the exterior domain of a round ball.
\end{enumerate}
\end{thm}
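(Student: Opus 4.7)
Since Theorem~\ref{thm10} is a compilation of six bounds taken verbatim from the cited literature, my plan is to treat each item separately, invoking the relevant reference and briefly outlining its strategy. In every case, the starting point is the variational characterization of $\mathrm{Cap}(\pt U)$ as the minimum of the Dirichlet energy over $\{g\in E^1(U): g=1 \text{ on }\pt U\}$, so that the heart of the proof is finding a nice competitor or exploiting a monotone functional along a geometric flow.

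Items (1), (2) and (6) admit essentially analytic proofs. For (2) I would use the Dirichlet principle $\mathrm{Cap}(\pt U)\leq \int_U|\nabla g|^2\,dx$ with the test function $g(x)=(\rho(x/|x|)/|x|)^{n-2}$, where $\rho:\SS^{n-1}\to \R_+$ is the radial function of the star-shaped boundary; polar coordinates compute the right-hand side and yield $(n-2)\int_{\pt U}\langle x,\nu\rangle^{-1}\,da$. For (1), the classical Polya--Szego conductor principle $\mathrm{Cap}(K)\leq (\int_0^\infty |\pt K_t|^{-1}\,dt)^{-1}$, combined with the Steiner formula $|\pt K_t|=\sum_{i=0}^{n-1}\int_{\pt U}\sigma_i\,da\cdot t^i$ for the outer parallel bodies of the convex $K=\overline{\Omega}$, directly produces the bound. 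For (6), Agostiniani--Mazzieri's monotonicity formula along the level sets of the capacitary potential $u$ provides a functional that decreases from $\{u=1\}=\pt U$ to the limit $u\to 0$, and rearranging the resulting inequality produces the stated $L^{(2n-3)/(n-1)}$ mean-curvature average.

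Items (3), (4) and (5) rest on inverse mean curvature flow and the monotonicity of suitable geometric functionals along the flow. For (4), mean convex and star-shaped, Gerhardt--Urbas give smooth long-time existence of classical IMCF, along which Xiao tracks a ratio of $\int_{\Sigma_t}\sigma_1\,da$ to a power of $|\Sigma_t|$, comparing the initial value with the asymptotically round limit; the bound appears as one compares $|\Sigma_t|^{(n-2)/(n-1)}$ to $\mathrm{Cap}(\Sigma_t)$ via the capacity monotonicity under inclusion. For (3), Freire--Schwartz replace the smooth flow with Huisken--Ilmanen's weak IMCF; the outer-minimizing hypothesis guarantees that the possible jumps of the evolving surface do not destroy the monotonicity of the relevant functional. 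For (5), Bray--Miao run the weak IMCF starting from $\pt U\subset \R^3$ and exploit the Geroch-type monotonicity of the Hawking mass $m_H(\Sigma_t)=\sqrt{|\Sigma_t|/(16\pi)}(1-\int_{\Sigma_t}H^2\,da/(16\pi))$ along the flow in $\R^3$; the asymptotic value of $m_H$ controls $\mathrm{Cap}(\pt U)$ in exactly the combination appearing in \eqref{eq-BM08}.

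The main obstacle in items (3), (4), (5) is the analysis of the weak flow, in particular controlling the possible jumps of the evolving surface and identifying the limit at infinity, and this is precisely where I would lean most heavily on the cited works. Once each of the six inequalities is in hand, the sharpness clauses follow in every case by checking that the underlying geometric inequality (Steiner, polar integration, monotonicity along IMCF, Hawking mass monotonicity, or the Agostiniani--Mazzieri formula) becomes an equality only when the evolving surfaces or parallel bodies are all centered spheres, which in turn forces $\pt U$ to be a round sphere (centered at the origin in item (2)).
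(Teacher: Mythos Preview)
Your proposal is appropriate: in the paper, Theorem~\ref{thm10} is not proved at all but is simply stated as a compilation of known results, each item carrying its citation, introduced by ``we recall the following results.'' Your item-by-item sketch of the strategies in the cited works (P\'olya--Szeg\H{o} conductor principle plus Steiner formula for (1), the radial test function for (2), smooth and weak IMCF for (3)--(5), and the Agostiniani--Mazzieri monotonicity for (6)) is accurate and in fact goes beyond what the paper itself provides.
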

\begin{rem}
By checking the proof of \cite[Corollary~4.4]{AM20}, we may conclude
\begin{align*}
\mathrm{Cap}(\pt U)\leq \frac{n-2}{n-1}|\pt U|\max_{\pt U} \sigma_1,
\end{align*}
with the rigidity statement. Note that here the upper bound involves $\max_{\pt U} \sigma_1$, not $\max_{\pt U} |\sigma_1|$.
\end{rem}

In this paper, we provide a new sharp upper bound for $\mathrm{Cap}(\pt U)$ in the case that $n=3$ and $\pt U$ is connected. Compared with \eqref{eq-BM08}, our upper bound \eqref{eq-new} below is better, since there holds the elementary inequality
\begin{equation}
2\frac{\sqrt{s}}{\mathrm{arsinh}\sqrt{s}}<1+\sqrt{s+1},\quad \forall s>0.
\end{equation}
Precisely, we prove the following.
\begin{thm}\label{thm12}
Let $U\subset \R^3$ be an exterior domain with smooth boundary. If $\Sigma=\pt U$ is connected, then
\begin{align}\label{eq-new}
\mathrm{Cap}(\pt U)&\leq 2(\pi|\Sigma|)^{1/2} \frac{\sqrt{\frac{\int_{\Sigma}H^2 da}{16\pi}-1}}{\mathrm{arsinh}\sqrt{\frac{\int_{\Sigma}H^2 da}{16\pi}-1}},
\end{align}
with the equality if and only if $U$ is the exterior domain of a round ball.
\end{thm}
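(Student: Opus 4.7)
The plan is to follow the Bray--Miao weak inverse-mean-curvature-flow (IMCF) strategy of \cite{BM08}, but to replace their use of Cauchy--Schwarz combined with the ordinary Hawking-mass monotonicity by a finer monotonicity based on a modified Hawking mass in the spirit of \cite{HW15}. By Payne's observation $\xi_1|\pt U|\le \mathrm{Cap}(\pt U)$ and the Dirichlet characterisation of capacity, it suffices to bound $\mathrm{Cap}(\pt U)$ from above. Let $u\colon U\to[0,\infty)$ denote the Huisken--Ilmanen weak IMCF potential starting from $\Sigma=\pt U$ (the connectedness of $\Sigma$ makes the existence theory of \cite{HI01} applicable), with level sets $\Sigma_t=\pt^{*}\{u<t\}$ satisfying $|\Sigma_t|=|\Sigma|e^{t}$ and $|\n u|=H$ $\mathcal{H}^{2}$-almost everywhere on $\Sigma_t$. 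Testing with $v(x)=\phi(u(x))$, $\phi(0)=1$, $\phi(+\infty)=0$, and using the coarea formula yields
\[
\mathrm{Cap}(\pt U)\le \int_{0}^{\infty}\phi'(t)^{2}F(t)\,dt,\qquad F(t):=\int_{\Sigma_t}H\,da,
\]
and the one-dimensional Euler--Lagrange equation $(\phi'F)'=0$ minimises the right-hand side to $\bigl(\int_{0}^{\infty}dt/F(t)\bigr)^{-1}$. Everything therefore reduces to a sharp upper bound for $F(t)$.

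The key estimate to establish is
\[
F(t)^{2}\le 16\pi\bigl(|\Sigma_t|+|\Sigma|\beta_{0}\bigr),\qquad \beta_{0}:=\frac{1}{16\pi}\int_{\Sigma}H^{2}\,da-1.
\]
The Bray--Miao bound \eqref{eq-BM08} follows from the strictly weaker estimate $F(t)^{2}\le |\Sigma_t|\cdot 16\pi(1+\beta_{0}e^{-t/2})$, obtained by combining Cauchy--Schwarz with Geroch's monotonicity $\sqrt{|\Sigma_t|}\beta_{t}\le \sqrt{|\Sigma|}\beta_{0}$; for large $t$ the second factor converges to $1$, which costs us an essentially $\sqrt{1+\beta_0}$ term rather than the arsinh appearing in \eqref{eq-new}. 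To sharpen this, I would introduce a modified Hawking-type quantity $\widetilde m(\Sigma_t)$ (motivated by \cite{HW15}) that (a)~is non-negative thanks to the Minkowski-type inequality of \cite{FS14} for outer-minimising surfaces, (b)~is controlled on $\Sigma$ by $|\Sigma|\beta_{0}$ via Cauchy--Schwarz at $t=0$, and (c)~is non-increasing along weak IMCF. For the smooth flow the monotonicity would be checked using the standard evolution identities $|\Sigma_t|'=|\Sigma_t|$, $F'(t)=2\int_{\Sigma_t}K/H\,da$, together with the Gauss--Bonnet identity $\int_{\Sigma_t}K\,da=4\pi$ (since $\Sigma_t$ is a topological sphere along the flow from a connected initial surface), so that $d\widetilde m/dt$ reduces to a manifest non-positive expression. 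The passage to the weak flow follows the Huisken--Ilmanen $\varepsilon$-regularisation and jump analysis, exactly as for the classical Geroch monotonicity.

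Combining the sharp $F$-estimate with the variational reduction and substituting $y=\sqrt{\beta_{0}}\,e^{-t/2}$,
\[
\int_{0}^{\infty}\frac{dt}{F(t)}\ge \frac{1}{4\sqrt{\pi|\Sigma|}}\int_{0}^{\infty}\frac{dt}{\sqrt{e^{t}+\beta_{0}}}=\frac{1}{2\sqrt{\pi|\Sigma|\beta_{0}}}\int_{0}^{\sqrt{\beta_{0}}}\frac{dy}{\sqrt{1+y^{2}}}=\frac{\mathrm{arsinh}\sqrt{\beta_{0}}}{2\sqrt{\pi|\Sigma|\beta_{0}}},
\]
which rearranges to the estimate \eqref{eq-new}. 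For rigidity, I would trace the chain of equalities: Cauchy--Schwarz at $t=0$ forces $H$ constant on $\Sigma$ (so $\Sigma$ is a round sphere by Alexandrov's theorem), monotonicity of $\widetilde m$ forces each $\Sigma_t$ to remain round, and the Euler--Lagrange optimality forces $v$ to be the capacitary potential itself; together these give $U=\R^{3}\setminus\overline{B_{R}}$ for some round ball.

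The step I expect to be most delicate is the construction and monotonicity verification of $\widetilde m$. The quantity must be engineered so that its time derivative along the smooth flow is, after invoking Gauss--Bonnet, a manifest non-positive expression (typically an $L^{2}$-type integral of an umbilic/roundness deficit), and so that the jumps intrinsic to the weak flow leave the monotonicity intact. This is the technical heart of the argument, paralleling the classical Geroch/Huisken--Ilmanen monotonicity but with a first-order correction tailored to control $\int_{\Sigma_t}H\,da$ rather than $\int_{\Sigma_t}H^{2}\,da$.
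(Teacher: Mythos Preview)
Your overall strategy coincides with the paper's: weak IMCF level sets, a radial test function $v=\bar f(\phi)$, coarea to reduce to $\int_0^\infty \bar f'(t)^2 F(t)\,dt$ with $F(t)=\int_{\Sigma_t}H$, optimisation in $\bar f$, and a modified-Hawking-mass monotonicity yielding the sharp bound $F(t)^2\le 16\pi(|\Sigma_t|+|\Sigma'|\beta_0')$, after which your arsinh computation is exactly the one in the paper.

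The one point where your outline drifts from what actually works is the design of $\widetilde m$. You describe a quantity that (a) is nonnegative by the Minkowski inequality of \cite{FS14}, (b) is bounded at $t=0$ by $|\Sigma|\beta_0$ via Cauchy--Schwarz, and is ``tailored to control $\int_{\Sigma_t}H$ rather than $\int_{\Sigma_t}H^2$''. This points toward something like $(\int_{\Sigma_t}H)^2-16\pi|\Sigma_t|$, but its derivative along smooth IMCF is $4(\int H)(\int K/H)-16\pi|\Sigma_t|$, and there is no evident reason this is nonpositive. The paper instead stays with $\int H^2$: it sets
\[
\widetilde m_H(\Sigma_t)=\sqrt{\tfrac{|\Sigma_t|}{16\pi}}\,m_H(\Sigma_t)=\tfrac{|\Sigma_t|}{16\pi}\Bigl(1-\tfrac{1}{16\pi}\int_{\Sigma_t}H^2\Bigr),
\]
and checks $\underline D_+\widetilde m_H\ge 0$ directly from the Huisken--Ilmanen differential inequality for $m_H$: the extra factor $\sqrt{|\Sigma_t|/16\pi}$ contributes $\tfrac12 m_H$, which combines with the $-\tfrac12\int H^2$ already present so that, after the weak Gauss--Bonnet $\int K=2\pi\chi(\Sigma_t)$, the lower bound reduces to $\tfrac{|\Sigma_t|}{(16\pi)^2}\bigl(16\pi-8\pi\chi(\Sigma_t)+2\int|\nabla\log H|^2\bigr)\ge 0$ for the connected (hence $\chi\le 2$) level sets. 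The bound on $F(t)$ then comes from Cauchy--Schwarz applied at \emph{every} $t$, namely $F(t)^2\le|\Sigma_t|\int_{\Sigma_t}H^2$, combined with the monotonicity $|\Sigma_t|\beta_t\le|\Sigma'|\beta_0'$. So neither the Minkowski inequality nor a ``Cauchy--Schwarz only at $t=0$'' step enters.

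Two further small corrections: $|\Sigma_t|=e^t|\Sigma'|$, not $e^t|\Sigma|$, unless $\Sigma$ is already outer-minimising; the passage from $\Sigma'$ to $\Sigma$ uses $|\Sigma'|\le|\Sigma|$ and $\int_{\Sigma'}H'^2\le\int_\Sigma H^2$ at the very end. For rigidity, your ``Cauchy--Schwarz at $t=0$ forces $H$ constant on $\Sigma$'' is not quite how the paper argues (equality first gives $\Sigma=\Sigma'$ and that the optimal test function is the capacitary potential, whose smoothness upgrades the weak flow to a smooth one, whence the equality in the monotonicity forces each $\Sigma_t$ umbilic and hence round).
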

\begin{rem}
The right-hand side of \eqref{eq-new} is understood as a limit when $\int_{\Sigma}H^2 da=16\pi$. In addition, the Willmore energy $\int_\Sigma H^2da$ of a closed surface $\Sigma\subset \R^3$ is known to satisfy
\begin{equation*}
\int_\Sigma H^2da\geq 16\pi,
\end{equation*}
with the equality if and only if $\Sigma$ is a round sphere (see e.g. \cite{Che71,RS10,Wil68}).
\end{rem}

\begin{proof}
By \cite{HI01}, there exists a proper, Lipschitz function $\phi\geq 0$ on $\overline{U}$, called the solution to the weak inverse mean curvature flow with the initial surface $\Sigma$, satisfying the following properties:
\begin{enumerate}
  \item The function $\phi$ has value $\phi|_\Sigma=0$ and $\lim_{x\to \infty} \phi(x)=\infty$. For $t>0$, the sets $\Sigma_t=\pt\{\phi\geq t\}$ and $\Sigma_t'=\pt \{\phi >t\}$ define two increasing families of $C^{1,\alpha}$ surfaces.
  \item For $t>0$, the surfaces $\Sigma_t$ ($\Sigma_t'$, resp.) minimize (strictly minimize, resp.) area among surfaces homologous to $\Sigma_t$ in the region $\{\phi\geq t\}$. The surface $\Sigma'=\pt \{\phi>0\}$ strictly minimizes area among surfaces homologous to $\Sigma$ in $U$.
  \item For almost all $t>0$, the weak mean curvature $H$ of $\Sigma_t$ is well defined and equals $|\nabla \phi|$, which is positive for almost all $x\in \Sigma_t$.
  \item For each $t>0$, the area $|\Sigma_t|=e^t|\Sigma'|$; and $|\Sigma_t|=e^t|\Sigma|$ if $\Sigma$ is outer-minimizing (i.e., $\Sigma$ minimizes area among all surfaces homologous to $\Sigma$ in $U$).
  \item All the surfaces $\Sigma_t$ ($t>0$) remain connected. The Hawking mass
  \begin{equation}
  m_H(\Sigma_t)=\sqrt{\frac{|\Sigma_t|}{16\pi}}\left(1-\frac{1}{16\pi}\int_{\Sigma_t}H^2da_t\right)
  \end{equation}
  satisfies $\lim_{t\to 0+}m_H(\Sigma_t)\geq m_H(\Sigma')$ and its right-lower derivative satisfies (see (5.24) in \cite{HI01}; but note the misprints on some coefficients there, and the correct coefficients are as in (5.22) in \cite{HI01})
  \begin{align*}
  &\underline{D}_+m_H(\Sigma_t):=\liminf_{s\to t+}\frac{m_H(\Sigma_s)-m_H(\Sigma_t)}{s-t}\\
  &\geq \sqrt{\frac{|\Sigma_t|}{16\pi}}\frac{1}{16\pi}\left(8\pi -4\pi \chi(\Sigma_t)+\int_{\Sigma_t}(2|\nabla \log H|^2+\frac{1}{2}(\lambda_1-\lambda_2)^2)da_t\right),
  \end{align*}
  where $\lambda_1$ and $\lambda_2$ are the weak principal curvatures of $\Sigma_t$ and $\chi(\Sigma_t)$ is the Euler characteristic of $\Sigma_t$. See \cite[Section~5]{HI01} for more details.
\end{enumerate}

Now we define a modified Hawking mass
\begin{equation}
\widetilde{m}_H(\Sigma_t):=\sqrt{\frac{|\Sigma_t|}{16\pi}}m_H(\Sigma_t)=\frac{|\Sigma_t|}{16\pi}\left(1-\frac{1}{16\pi}\int_{\Sigma_t}H^2da_t\right).
\end{equation}
Then we can check $\lim_{t\to 0+}\widetilde{m}_H(\Sigma_t)\geq \widetilde{m}_H(\Sigma')$ and
\begin{align*}
&\underline{D}_+\widetilde{m}_H(\Sigma_t)
=\sqrt{\frac{|\Sigma_t|}{16\pi}}\frac{1}{2}m_H(\Sigma_t)+\sqrt{\frac{|\Sigma_t|}{16\pi}}\underline{D}_+m_H(\Sigma_t)\\
&\geq \frac{|\Sigma_t|}{(16\pi)^2}\left(\left(8\pi-\frac{1}{2}\int_{\Sigma_t}H^2da_t\right)\right.\\
&\left.\quad +8\pi -4\pi \chi(\Sigma_t)+\int_{\Sigma_t}(2|\nabla \log H|^2+\frac{1}{2}(\lambda_1-\lambda_2)^2)da_t\right)\\
&= \frac{|\Sigma_t|}{(16\pi)^2}\left(16\pi -4\pi \chi(\Sigma_t)+\int_{\Sigma_t}(2|\nabla \log H|^2-2\lambda_1 \lambda_2)da_t\right)\\
&= \frac{|\Sigma_t|}{(16\pi)^2}\left(16\pi -8\pi \chi(\Sigma_t)+\int_{\Sigma_t}2|\nabla \log H|^2da_t\right)\\
&\geq 0,
\end{align*}
where the last equality holds because of the weak Gauss--Bonnet formula (Page~403 in \cite{HI01}) and the last inequality is due to the fact the surfaces $\Sigma_t$ remain connected.
\begin{rem}
The introduction of the modified Hawking mass is inspired by the work \cite{HW15}.
\end{rem}

Now we choose the test function $f(x)=\bar{f}(\phi(x))$ for some $C^1$ function $\bar{f}:[0,\infty)\to \R$ satisfying $\bar{f}(0)=1$ and $\bar{f}(\infty)=0$ to be determined. Therefore
\begin{align*}
\mathrm{Cap}(\pt U)&\leq \int_U |\nabla f|^2dx=\int_U (\bar{f}'(\phi(x)))^2 |\nabla \phi|^2dx.
\end{align*}
Using the co-area formula, we get
\begin{align*}
\int_U (\bar{f}'(\phi(x)))^2 |\nabla \phi|^2dx&=\int_0^\infty (\bar{f}'(t))^2 \int_{\Sigma_t} |\nabla \phi|da_t dt.
\end{align*}
Note that
\begin{align*}
\int_{\Sigma_t} |\nabla \phi|da_t&=\int_{\Sigma_t}Hda_t\leq \left(\int_{\Sigma_t}H^2da_t\right)^{1/2}|\Sigma_t|^{1/2}\\
&\leq \left(16\pi -e^{-t}(16\pi-\int_{\Sigma'}H'^2 da)\right)^{1/2}e^{\frac{1}{2}t}|\Sigma'|^{1/2},
\end{align*}
where we used the H\"{o}lder inequality and the monotonicity of the modified Hawking mass. Here $H'$ denotes the mean curvature of the surface $\Sigma'$. Thus we get
\begin{align*}
\int_U |\nabla f|^2dx&\leq \int_0^\infty (\bar{f}'(t))^2 \left(16\pi e^t+(\int_{\Sigma'}H'^2 da-16\pi)\right)^{1/2} dt\cdot |\Sigma'|^{1/2}.
\end{align*}
Meanwhile, note that by the H\"{o}lder inequality, we have
\begin{align*}
1&=(\bar{f}(0))^2=(-\int_0^\infty \bar{f}'(t)dt)^2\\
&\leq \int_0^\infty (\bar{f}'(t))^2 \left(16\pi e^t+(\int_{\Sigma'}H'^2 da-16\pi)\right)^{1/2} dt\\
&\times \int_0^\infty   \left(16\pi e^t+(\int_{\Sigma'}H'^2 da-16\pi)\right)^{-1/2} dt,
\end{align*}
with the equality when
\begin{equation*}
\bar{f}'(t)=c\left(16\pi e^t+(\int_{\Sigma'}H'^2 da-16\pi)\right)^{-1/2},\quad c\in \R.
\end{equation*}
Set
\begin{equation*}
s:=\frac{\int_{\Sigma'}H'^2 da}{16\pi}-1.
\end{equation*}
Note
\begin{equation*}
\int_t^\infty \left( e^{t'}+s\right)^{-1/2}dt'=\frac{2}{\sqrt{s}}\mathrm{arsinh}(\sqrt{s}e^{-t/2}).
\end{equation*}
So noticing $\bar{f}(0)=1$ and $\bar{f}(\infty)=0$ we may choose
\begin{equation*}
\bar{f}(t)=\frac{\mathrm{arsinh}(\sqrt{s}e^{-t/2})}{\mathrm{arsinh}\sqrt{s}}.
\end{equation*}
Then in this case we get
\begin{align*}
\int_U |\nabla f|^2dx&\leq  (16\pi)^{1/2}|\Sigma'|^{1/2}\frac{1}{2}\frac{\sqrt{s}}{\mathrm{arsinh}\sqrt{s}}.
\end{align*}
As a result, we have
\begin{align*}
\mathrm{Cap}(\pt U)&\leq 2 \pi^{1/2}|\Sigma'|^{1/2} \frac{\sqrt{s}}{\mathrm{arsinh}\sqrt{s}}.
\end{align*}

Next we replace $\Sigma'$ by $\Sigma$. First recall that $\Sigma'$ strictly minimizes area among all surfaces homologous to $\Sigma$. So $|\Sigma'|\leq |\Sigma|$.

Second, because $\Sigma$ is $C^2$, the surface $\Sigma'$ is $C^{1,1}$ and moreover $C^\infty$ where $\Sigma'$ does not contact $\Sigma$. Besides, the mean curvature $H'$ of $\Sigma'$ satisfies
\begin{align*}
H'=0\text{ on }\Sigma'\setminus \Sigma, \text{ and } H'=H\geq 0\text{ } a.e. \text{ on }\Sigma'\cap \Sigma.
\end{align*}
Therefore we see
\begin{equation*}
\int_{\Sigma'}H'^2 da\leq \int_\Sigma H^2 da.
\end{equation*}
In conclusion, we derive (noting that $\sqrt{s}/\mathrm{arsinh}\sqrt{s}$ is increasing in $s$)
\begin{align*}
\mathrm{Cap}(\pt U)&\leq 2(\pi |\Sigma|)^{1/2} \frac{\sqrt{\frac{\int_{\Sigma}H^2 da}{16\pi}-1}}{\mathrm{arsinh}\sqrt{\frac{\int_{\Sigma}H^2 da}{16\pi}-1}}.
\end{align*}

Next we consider the equality case. If $\int_\Sigma H^2 da=16\pi$, then $\Sigma$ is a round sphere. Now suppose $\int_\Sigma H^2 da>16\pi$. Then checking the above proof, we see that
\begin{align*}
|\Sigma'|=|\Sigma|, \quad \int_{\Sigma'}H'^2da=\int_\Sigma H^2 da,\quad \widetilde{m}_H(\Sigma_t)=\widetilde{m}_H(\Sigma'), \; \forall t>0,
\end{align*}
and $f(x)=\bar{f}(\phi(x))$ is the electrostatic capacitary potential on $U$ with $f|_\Sigma=1$ and $f(\infty)=0$. So $\Sigma$ is outer-minimizing and the modified Hawking mass $\widetilde{m}_H(\Sigma_t)$ is equal to $\widetilde{m}_H(\Sigma)$ for all $t$. Moreover, since $f(x)$ is harmonic in $U$, any level set of $f(x)$ can not have non-empty interior, and so the surfaces $\Sigma_t$ and $\Sigma$ do not jump to $\Sigma_t'$ and $\Sigma'$ respectively, in the sense of \cite{HI01} (meaning $\Sigma_t=\Sigma_t'$ and $\Sigma=\Sigma'$). Next fix any $t>0$ and consider the exterior domain of $\Sigma_t$ in $U$. Using the fact $f(x)$ is constant on $\Sigma_t$ and $\Sigma_t$ is at least $C^1$, by the Hopf boundary lemma, we see that $\nabla f$ never vanishes on $\Sigma_t$. So $\phi=\bar{f}^{-1}\circ f$ is a smooth function on $U$ with $\nabla \phi\neq 0$. It follows that the surfaces $\{\Sigma_t\}$ evolve smoothly by the inverse mean curvature flow. Then the equality case of $\widetilde{m}_H(\Sigma_t)\geq \widetilde{m}_H(\Sigma)$ implies that $H$ is constant on $\Sigma_t$, and so $\Sigma_t$ is a round sphere. So $\Sigma$ itself is a round sphere, which contradicts $\int_\Sigma H^2 da>16\pi$. Therefore when $\int_\Sigma H^2 da>16\pi$, we have the strict inequality
\begin{align*}
\mathrm{Cap}(\pt U)&< (4\pi |\Sigma|)^{1/2} \frac{\sqrt{\frac{\int_{\Sigma}H^2 da}{16\pi}-1}}{\mathrm{arsinh}\sqrt{\frac{\int_{\Sigma}H^2 da}{16\pi}-1}}.
\end{align*}

So the proof of Theorem~\ref{thm12} is complete.

\end{proof}

\section{Proof of Theorem~\ref{thm2}}\label{sec5}

Recall the min-max variational characterization of the Steklov eigenvalues
\begin{equation*}
\xi_i=\inf_{\substack{V\subset E^1(U)\\\mathrm{dim}\,V=i}}\sup_{0\neq \varphi\in V}\frac{\int_{U}|\nabla \varphi|^2 dx}{\int_{\pt U} \varphi^2da},\quad i\geq 1.
\end{equation*}
In this section we focus on the second Steklov eigenvalue $\xi_2$.

Let $u\in E^1(U)$ be the electrostatic capacitary potential of $\pt U$. Namely, $u\in E^1(U)$ uniquely solves (Proposition~\ref{D-p})
\begin{equation*}
\begin{cases}
\Delta u=0,\text{ in }U,\\
u=1, \text{ on }\pt U.
\end{cases}
\end{equation*}
On the other hand, for any unit vector $e\in \SS^{n-1}$, there exists a unique solution, the virtual mass potential $w[e]\in \mathcal{H}(U)$ to the partial differential equation (Proposition~\ref{N-p})
\begin{equation*}
\begin{cases}
\Delta w[e]=0,\text{ in }U,\\
\dfrac{\pt w[e]}{\pt \nu}=-\dfrac{\pt \langle x,e\rangle}{\pt \nu}=-\langle \nu,e\rangle, \text{ on }\pt U.
\end{cases}
\end{equation*}
Then the function $w[e]$ and the electrostatic capacitary potential $u$ satisfy
\begin{equation}\label{eq-orthogonal}
\int_U \langle \nabla w[e],\nabla u\rangle dx=-\int_{\pt U}\frac{\pt w[e]}{\pt \nu}u\, da=\int_{\pt U}\langle e,\nu\rangle da=0.
\end{equation}
So $w[e]$ and $u$ are linearly independent, and then $\{aw[e]+bu:a,b\in \R\}$ is a $2$-dimensional linear space in $E^1(U)$.
Therefore we have
\begin{align*}
\xi_2&\leq \sup_{a^2+b^2\neq 0}\frac{\int_{U}|\nabla (aw[e]+bu)|^2 dx}{\int_{\pt U} (aw[e]+bu)^2da}\\
&= \sup_{a^2+b^2\neq 0}\frac{a^2\int_{U}|\nabla w[e]|^2 dx+b^2\int_U|\nabla u|^2dx}{a^2\int_{\pt U} w[e]^2da+b^2|\pt U|},
\end{align*}
where we used \eqref{eq-orthogonal} and the assumption in Theorem~\ref{thm2}
\begin{equation}
\int_{\pt U}w[e]uda=\int_{\pt U}w[e]da=0,\quad \forall e\in\SS^{n-1}.
\end{equation}
Choosing respectively $a=0$ and $b=0$, we conclude
\begin{align*}
\xi_2&\leq \max \big\{\frac{  \mathrm{Cap}(\pt U)}{|{\pt U}|},\frac{\int_{U}|\nabla w[e]|^2 dx}{\int_{\pt U} w[e]^2da}\big\},
\end{align*}
where we recall that $\mathrm{Cap}(\pt U)=\int_U |\nabla u|^2 dx$ is the electrostatic capacity of $\pt U$.
\begin{rem}\label{rem-Payne}
At the bottom of Page~537 in \cite{Pay56}, Payne claimed that with the proper choice of the origin the following (in our notation) holds,
\begin{align*}
\int_{\pt U}w[e]da=0,\quad \forall e\in \SS^{n-1}.
\end{align*}
We can not follow this claim, since the quantity $\int_{\pt U}w[e]da$ should be unchanged under the translation of the origin. Instead, we have to impose this assumption.
\end{rem}
Assume $\xi_2> \mathrm{Cap}(\pt U)/|{\pt U}|$. Then for any $e\in \SS^{n-1}$, we have
\begin{align*}
\xi_2&\leq \frac{(\int_{U}|\nabla w[e]|^2 dx)^2}{\int_{\pt U} w[e]^2da}\frac{1}{\int_{U}|\nabla w[e]|^2 dx}\\
&=\frac{(\int_{\pt U}\langle \nabla w[e],\nu\rangle \, w[e]da)^2}{\int_{\pt U} w[e]^2da}\frac{1}{\int_{U}|\nabla w[e]|^2 dx}\\
&\leq \frac{\int_{\pt U}\langle \nabla w[e],\nu\rangle^2 da}{\int_{U}|\nabla w[e]|^2 dx}\\
&=\frac{\int_{\pt U}\langle e,\nu\rangle^2 da}{\int_{U}|\nabla w[e]|^2 dx},
\end{align*}
which implies
\begin{equation*}
nW_{ave}\, \xi_2\leq \sum_{i=1}^n \int_{\pt U}\langle \pt_i,\nu\rangle^2 da=|{\pt U}|.
\end{equation*}
Here $\pt_i$ $(i=1,2,\dots,n)$ is the standard coordinate vector and $W_{ave}:=\sum_{i=1}^n \int_{U}|\nabla w[\pt_i]|^2 dx/n$ is the average virtual mass corresponding to $U$. We refer to Section~\ref{sec5.1} for its precise definition and basic properties.

By the lower bound \eqref{bound-virtual-mass1} on $W_{ave}$, we conclude
\begin{equation*}
\xi_2\leq \max \big\{\frac{ \mathrm{Cap}(\pt U)}{|{\pt U}|},\frac{(n-1)|{\pt U}|}{nV}\big\}.
\end{equation*}

So we finish the proof of Theorem~\ref{thm2}.

\section{Auxiliary and related results}\label{sec6}

In this section we generalize the results in \cite{Sch57,Wal90} of the case $n=3$ to the higher dimensional case $n> 3$. Since some coefficients appearing in the generalization depends on the dimension $n$, we find it may be worth providing all the details for the convenience of readers. Our presentation mainly follows that in \cite{Wal90}. Besides, in this section the Einstein convention on the summation of indices is used unless otherwise stated, i.e., repeated indices mean the summation over them.

\subsection{Virtual mass}\label{sec5.1}

Now we consider the virtual mass of the domain $\Omega:=\R^n\setminus \overline{U}$. For any unit vector $e\in \SS^{n-1}$, there exists a unique solution, the virtual mass potential $w[e] \in E^1(U)$ to the partial differential equation (Proposition~\ref{N-p})
\begin{equation}\label{v-m-p}
\begin{cases}
\Delta w[e]=0,\text{ in }U,\\
\dfrac{\pt w[e]}{\pt \nu}=-\dfrac{\pt \langle x,e\rangle}{\pt \nu}=-\langle \nu,e\rangle, \text{ on }\pt U.
\end{cases}
\end{equation}
\begin{defn}[\cite{SS49,AK07}]
The virtual mass matrix $W_{ij}$ ($i,j=1,2,\dots,n$) is defined as
\begin{align*}
W_{ij}:=\int_U \langle \nabla w[\pt_i],\nabla w[\pt_j]\rangle dx,
\end{align*}
where $\pt_i$ $(i=1,2,\dots,n)$ is the standard coordinate vector. The geometric invariant $\mathrm{tr}\,W/n$ is called the average virtual mass $W_{ave}$.
\end{defn}

The goal of this subsection is to derive a sharp lower bound for the matrix $W$. For that purpose we need to use the gravitational potential for $\Omega=\R^n\setminus \overline{U}$. In $\R^n$ ($n\geq 3$), the gravitational potential of $\Omega$ is defined as
\begin{equation*}
\Psi(x):=-\frac{1}{(n-2)\omega_{n-1}}\int_\Omega \frac{1}{|y-x|^{n-2}}dy,
\end{equation*}
where $\omega_{n-1}=|\SS^{n-1}|$. By direct verification we note
\begin{align}\label{eq-harmonic}
&\Delta \Psi=0,\text{ in }U;\quad \Delta \Psi=1,\text{ in } \Omega.
\end{align}
Define
\begin{equation*}
\overline{\Psi}_{ij}:=\frac{\int_{ \Omega}\Psi_{ij}dx}{|\Omega|}.
\end{equation*}
For later use, first we have an observation.
\begin{lem}
For the matrix $\overline{\Psi}_{ij}$, we have
\begin{equation*}
0<\overline{\Psi}<I.
\end{equation*}
\end{lem}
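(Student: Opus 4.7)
The plan is to prove the two inequalities separately. Once $\overline{\Psi} > 0$ strictly and $\mathrm{tr}(\overline{\Psi}) = 1$ are established, the upper bound $\overline{\Psi} < I$ is automatic, since any symmetric matrix with positive eigenvalues summing to one has each eigenvalue in $(0,1)$. The trace identity is immediate from $\Delta \Psi = 1$ in $\Omega$ via equation \eqref{eq-harmonic}:
\[
\sum_{i=1}^n \overline{\Psi}_{ii} \;=\; \frac{1}{|\Omega|}\int_\Omega \Delta \Psi \, dx \;=\; 1.
\]

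For strict positive-definiteness, I would fix a unit vector $e \in \SS^{n-1}$ and identify $\overline{\Psi}_{ij} e_i e_j \cdot |\Omega|$ with the total Dirichlet energy of an auxiliary function, making nonnegativity manifest. By the divergence theorem (noting that $\nu$ is the outer normal to $\Omega$),
\[
\overline{\Psi}_{ij} e_i e_j \,|\Omega| \;=\; \int_\Omega \partial_e^2 \Psi \, dx \;=\; \int_{\pt U} (\partial_e \Psi)\,\langle \nu, e\rangle \, da.
\]
Set $u := \partial_e \Psi$. Since $\Psi$ is the Newtonian potential of $\chi_\Omega$, the gradient $\nabla \Psi$ is continuous across $\pt U$, so $u \in C^0(\R^n)$; it is harmonic on each of $\Omega$ and $U$ because $\Delta \Psi$ is locally constant off $\pt U$, and the asymptotics of $\Psi$ give $u = O(|x|^{1-n})$ and $\nabla u = O(|x|^{-n})$ at infinity.

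The distributional Laplacian is $\Delta u = \partial_e \chi_\Omega$, and testing against a smooth $\varphi$ yields the jump relation $(\partial_\nu u)^+_\Omega - (\partial_\nu u)^-_U = \langle \nu, e\rangle$. Applying Green's first identity separately on $\Omega$ and on $U \cap B_R$ and letting $R \to \infty$ (the boundary term at infinity vanishes by the decay rates), one obtains
\[
\int_{\R^n} |\nabla u|^2 \, dx \;=\; \int_{\pt U} u \bigl[(\partial_\nu u)^+ - (\partial_\nu u)^-\bigr] \, da \;=\; \int_{\pt U} (\partial_e \Psi)\,\langle \nu, e\rangle \, da \;=\; \overline{\Psi}_{ij} e_i e_j \,|\Omega|.
\]
Hence $\overline{\Psi}_{ij} e_i e_j \geq 0$, with equality forcing $u \equiv 0$ by the decay, hence $\langle \nu, e\rangle \equiv 0$ on $\pt U$ via the jump relation. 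But this is impossible: at a point where $\langle x, e\rangle$ attains its maximum on the compact set $\overline{\Omega}$, the outer normal equals $e$, so $\langle \nu, e\rangle = 1$ there.

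The main delicacy I anticipate is the careful bookkeeping of the jump relation for the volume potential $\Psi$ and the verification that boundary terms at infinity vanish for both Green's identity applications; both are routine given the smooth boundary and the explicit decay of $\Psi$ and its derivatives.
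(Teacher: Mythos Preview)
Your proof is correct. The argument for strict positive-definiteness is essentially the same computation as in the paper, just phrased for a fixed direction $e$ rather than as a matrix identity: the paper shows directly that
\[
\int_\Omega \Psi_{ij}\,dx \;=\; \int_{U\cup\Omega}\Psi_{ik}\Psi_{kj}\,dx,
\]
using the jump relation for $\Psi_{ij}$ across $\partial U$ (Proposition~\ref{prop-jump}) and integration by parts on each side of the boundary. Contracting with $e_ie_j$ recovers exactly your identity $\overline{\Psi}_{ij}e_ie_j\,|\Omega|=\int_{\R^n}|\nabla(\partial_e\Psi)|^2\,dx$.

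Where you genuinely diverge from the paper is in the upper bound $\overline{\Psi}<I$. The paper proves a second, independent energy identity,
\[
\int_\Omega(\delta_{ij}-\Psi_{ij})\,dx \;=\; \int_\Omega(\delta_{ik}-\Psi_{ik})(\delta_{kj}-\Psi_{kj})\,dx \;+\; \int_U\Psi_{ik}\Psi_{kj}\,dx \;>\;0,
\]
which parallels the first computation. Your route is shorter and more elementary: once $\overline{\Psi}>0$ and $\mathrm{tr}\,\overline{\Psi}=1$ are in hand, the spectral observation that positive eigenvalues summing to $1$ each lie in $(0,1)$ (valid since $n\geq 3\geq 2$) finishes the proof immediately. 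The paper's approach has the advantage of yielding an explicit formula for $I-\overline{\Psi}$ as a sum of Gram-type integrals, which could be useful elsewhere; your approach buys brevity and avoids a second application of the jump relation.
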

\begin{proof}
First, using \eqref{eq-harmonic} and the divergence theorem (the integral term on the boundary of a large ball $B_R$ vanishes as $R\to \infty$ due to Proposition~\ref{prop-decay-rate}), we note
\begin{align*}
\int_{U}\Psi_{ik}\Psi_{kj}dx&=\int_{U}(\Psi_{ik}\Psi_j)_kdx\\
&=-\int_{\pt U}\Psi^{out}_{ik}\Psi_j\nu_kda\\
&=-\int_{\pt U}(\Psi^{in}_{ik}-\nu_i\nu_k)\Psi_j\nu_kda\\
&=-\int_{ \Omega}\Psi_{ik}\Psi_{kj}dx+\int_{ \Omega}\Psi_{ij} dx,
\end{align*}
where we used the relation
\begin{equation}
\Psi^{in}_{ik}(x_0)-\Psi^{out}_{ik}(x_0)=\nu_i(x_0)\nu_k(x_0), \quad x_0\in \pt U
\end{equation}
from Proposition~\ref{prop-jump} below. Here and in the sequel we use the notations
\begin{equation}
\Psi^{out}_{ij}(x_0):=\lim_{U \ni x \rightarrow x_0}\Psi_{ij}(x),\quad \Psi^{in}_{ij}(x_0):=\lim_{\Omega \ni x \rightarrow x_0}\Psi_{ij}(x).
\end{equation}

Therefore we have
\begin{align*}
\int_{\Omega}\Psi_{ij}dx=\int_{U\cup \Omega}\Psi_{ik}\Psi_{kj}dx>0.
\end{align*}
Second we note
\begin{align*}
\int_{\Omega}(\delta_{ik}-\Psi_{ik})(\delta_{kj}-\Psi_{kj})dx=\int_\Omega(\delta_{ij}-2\Psi_{ij}+\Psi_{ik}\Psi_{kj})dx.
\end{align*}
Thus we get
\begin{align*}
\int_\Omega(\delta_{ij}-\Psi_{ij})dx&=\int_{\Omega}(\delta_{ik}-\Psi_{ik})(\delta_{kj}-\Psi_{kj})dx+\int_{\Omega}\Psi_{ij}dx-\int_{\Omega}\Psi_{ik}\Psi_{kj}dx\\
&=\int_{\Omega}(\delta_{ik}-\Psi_{ik})(\delta_{kj}-\Psi_{kj})dx+\int_{U}\Psi_{ik}\Psi_{kj}dx>0.
\end{align*}

\end{proof}

Then we can prove the following sharp bound for $W$.
\begin{prop}\label{prop-virtual-mass}
For any bounded domain $\Omega\subset \R^n$ ($n\geq 3$) with smooth boundary, we have the matrix inequality
\begin{equation}\label{bound-virtual-mass}
\frac{W}{|\Omega|}\geq (I-\overline{\Psi})^{-1}-I.
\end{equation}
\end{prop}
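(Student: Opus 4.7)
The plan is to apply a vectorial Dirichlet principle to the Neumann problem \eqref{v-m-p}, using test functions built from the partial derivatives of the gravitational potential $\Psi$, and then to reduce the resulting matrix inequality to a Cauchy--Schwarz estimate already implicit in the proof of the preceding lemma.

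First, for $v=(v_1,\ldots,v_n)\in\R^n$ set $w[v]:=\sum_i v_i\, w[\pt_i]$; by linearity $w[v]\in\mathcal{H}(U)$ satisfies $\pt_\nu w[v]=-\langle \nu,v\rangle$ on $\pt U$, so $v^{T} W v=\int_U |\nabla w[v]|^2 dx$. Expanding $0\leq\int_U|\nabla(\varphi-w[v])|^2\,dx$ and integrating by parts (the boundary contribution at infinity vanishes by Proposition~\ref{prop-decay-rate} combined with the comparable decay of $\nabla\Psi$) gives the Dirichlet-principle lower bound
\[
v^{T} W v \;\geq\; 2\int_{\pt U}\langle \nu,v\rangle\,\varphi\,da \;-\; \int_U|\nabla\varphi|^2\,dx,\qquad \forall\varphi\in E^1(U).
\]

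Next I would insert the family of test functions $\varphi_c:=\sum_k c_k\Psi_k$ with $c\in\R^n$, which lie in $E^1(U)$ by \eqref{eq-harmonic} and the decay of $\nabla\Psi$. Using the divergence theorem in $\Omega$ (whose outward unit normal is $\nu$, since $\nu$ points from $\Omega$ into $U$) one obtains $\int_{\pt U}\nu_i\Psi_k\,da=\int_\Omega \Psi_{ki}\,dx=|\Omega|\,\overline{\Psi}_{ik}$, so the boundary integral becomes $|\Omega|\,v^{T}\overline{\Psi} c$, while the interior integral equals $c^{T} M c$ with $M_{k\ell}:=\int_U\Psi_{ki}\Psi_{\ell i}\,dx$---the same quantity $\int_U \Psi_{ik}\Psi_{kj}\,dx$ already featured in the preceding lemma. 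Therefore
\[
v^{T} W v \;\geq\; 2|\Omega|\,v^{T}\overline{\Psi}\, c - c^{T} Mc,\qquad \forall c\in\R^n.
\]
Verifying that $M$ is positive definite (a unique continuation argument: $c^{T} Mc=0$ forces $c\cdot\nabla\Psi$ to be constant on $U$, hence zero by decay at infinity; continuity of $\nabla\Psi$ across $\pt U$ and harmonicity of $c\cdot\nabla\Psi$ in $\Omega$ then yield $c\cdot\nabla\Psi\equiv 0$ globally, so $c=0$), the optimal choice $c=|\Omega|\,M^{-1}\overline{\Psi} v$ yields $W\geq |\Omega|^2\,\overline{\Psi} M^{-1}\overline{\Psi}$.

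Finally, since $0<\overline{\Psi}<I$ by the preceding lemma, $\overline{\Psi}$ and $I-\overline{\Psi}$ are positive-definite commuting symmetric matrices; by operator monotonicity of $A\mapsto A^{-1}$, the bound $|\Omega|^2\,\overline{\Psi} M^{-1}\overline{\Psi}\geq |\Omega|\bigl((I-\overline{\Psi})^{-1}-I\bigr)$ is equivalent to $M\leq |\Omega|\,\overline{\Psi}(I-\overline{\Psi})$. The identity $|\Omega|\overline{\Psi}=M+N$, with $N_{ij}:=\int_\Omega \Psi_{ik}\Psi_{kj}\,dx$, proved en route to the preceding lemma, reduces this further to $N\geq |\Omega|\overline{\Psi}^2$, which is pure Cauchy--Schwarz: for any $u\in\R^n$,
\[
|\Omega|^2\, u^{T}\overline{\Psi}^2 u=\sum_k\Bigl(\int_\Omega \Psi_{ki}u_i\,dx\Bigr)^2\leq |\Omega|\sum_k\int_\Omega(\Psi_{ki}u_i)^2\,dx=|\Omega|\,u^{T} N u.
\]
The main points requiring care are (i) bookkeeping the sign conventions around $\nu$ pointing into $U$, so that boundary integrals on $\pt U$ must be interpreted with the outward normal of $\Omega$ when invoking the divergence theorem there, and (ii) the strict positivity of $M$ needed to justify the inversion; once these are handled, the remaining algebra is direct.
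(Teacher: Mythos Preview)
Your proof is correct and follows essentially the same route as the paper: a Dirichlet-principle lower bound for $W$ using trial functions built from $\nabla\Psi$, together with the identity $M+N=|\Omega|\overline{\Psi}$ and the Cauchy--Schwarz estimate $N\geq |\Omega|\overline{\Psi}^2$ already appearing in the preceding lemma. The only difference is organizational: the paper parametrizes the trial functions by a symmetric matrix $A$ and applies Cauchy--Schwarz before optimizing $A$, which sidesteps your separate verification that $M>0$.
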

\begin{rem}
Proposition~\ref{prop-virtual-mass} in the case $n=3$ appeared in \cite{Wal90}. We generalize here the result in \cite{Wal90} to the higher dimensional case.
\end{rem}
Taking trace on \eqref{bound-virtual-mass} and using the Cauchy--Schwarz inequality, we obtain the following.
\begin{corr}
For any bounded domain $\Omega\subset \R^n$ ($n\geq 3$) with smooth boundary, we have
\begin{equation}\label{bound-virtual-mass1}
W_{ave}\geq \frac{1}{n-1}|\Omega|.
\end{equation}
\end{corr}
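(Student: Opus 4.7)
The plan is to take the trace of the matrix inequality in Proposition~\ref{prop-virtual-mass} and then apply the Cauchy--Schwarz (equivalently, AM--HM) inequality to the eigenvalues of $\overline{\Psi}$. The key preliminary observation is a trace computation: since $\Delta \Psi = 1$ in $\Omega$ by \eqref{eq-harmonic}, we have
\begin{equation*}
\mathrm{tr}\,\overline{\Psi} = \sum_{i=1}^n \frac{\int_\Omega \Psi_{ii}\,dx}{|\Omega|} = \frac{\int_\Omega \Delta \Psi\,dx}{|\Omega|} = 1.
\end{equation*}
Combined with the lemma $0 < \overline{\Psi} < I$, this tells us that $\overline{\Psi}$ is a symmetric positive-definite matrix whose eigenvalues $\lambda_1,\dots,\lambda_n$ lie in $(0,1)$ and sum to $1$.

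Next I would take the trace of \eqref{bound-virtual-mass}. Since both sides are symmetric matrices and the trace preserves the matrix order, I get
\begin{equation*}
\frac{\mathrm{tr}\,W}{|\Omega|} \geq \mathrm{tr}\bigl((I-\overline{\Psi})^{-1}\bigr) - n = \sum_{i=1}^n \frac{1}{1-\lambda_i} - n.
\end{equation*}
Applying the AM--HM (or Cauchy--Schwarz) inequality to the positive numbers $1-\lambda_1,\dots,1-\lambda_n$, whose sum equals $n - \mathrm{tr}\,\overline{\Psi} = n-1$, yields
\begin{equation*}
\sum_{i=1}^n \frac{1}{1-\lambda_i} \geq \frac{n^2}{\sum_{i=1}^n (1-\lambda_i)} = \frac{n^2}{n-1}.
\end{equation*}
Combining the two inequalities gives $\mathrm{tr}\,W/|\Omega| \geq n^2/(n-1) - n = n/(n-1)$, which upon dividing by $n$ and recalling $W_{ave} = \mathrm{tr}\,W/n$ yields the desired bound \eqref{bound-virtual-mass1}.

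There is no real obstacle here beyond recognizing that Proposition~\ref{prop-virtual-mass} and the identity $\mathrm{tr}\,\overline{\Psi} = 1$ together reduce the claim to a purely algebraic inequality about positive reals summing to a fixed value. The only subtle point is justifying that the matrix inequality passes to the trace, but this is immediate because if $A \geq B$ in the symmetric positive-semidefinite order then $\mathrm{tr}(A-B) \geq 0$.
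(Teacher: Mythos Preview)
Your proof is correct and follows essentially the same approach as the paper: take the trace of the matrix inequality in Proposition~\ref{prop-virtual-mass} and apply the Cauchy--Schwarz (AM--HM) inequality, using the identity $\mathrm{tr}\,\overline{\Psi}=1$ (which follows from $\Delta\Psi=1$ in $\Omega$). The paper states this in one line without spelling out the details; you have supplied them accurately.
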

\begin{rem}
When $n=3$, the bound \eqref{bound-virtual-mass1} is due to \cite{Sch57}.
\end{rem}
\begin{rem}
In the literature, there are extensive works concerning the generalized polarization which includes the virtual mass (and the polarization in Section~\ref{sec5.2}) as a limit case. The generalized polarization (see e.g. \cite{AK07}) defined for a bounded set $\Omega\subset \R^n$ involves a parameter $0<k\neq 1<\infty$, the conductivity of $\Omega$. When $k=0$ ($\Omega$ is insulated), the generalized polarization reduces to the virtual mass; while when $k\rightarrow \infty$ ($\Omega$ is perfectly conducting), it reduces to the polarization in Section~\ref{sec5.2}. See \cite[Page~89]{AK07}. The inequality for $0<k\neq 1<\infty$ corresponding to \eqref{bound-virtual-mass1} was obtained in \cite{BK93,Lip93,CV03}; see \cite{CV04} for a survey. In \cite{BK93,Lip93,CV03}, different methods from the one in the proof of Proposition~\ref{prop-virtual-mass} were used. Moreover, the equality (rigidity) case for $0<k\neq 1<\infty$ and $n=3$ was handled in \cite{KM08}.
\end{rem}
\begin{proof}[Proof of Proposition~\ref{prop-virtual-mass}]

%
%

For a constant symmetric matrix $A=[a_{ij}]$ to be determined, define a family of functions $\widetilde{w}_i$ by
\begin{equation*}
\widetilde{w}_i(x)=a_{ij}\Psi_j(x).
\end{equation*}
We have the matrix inequality
\begin{equation*}
\int_U\langle \nabla (w[\pt_i]-\widetilde{w}_i), \nabla (w[\pt_j]-\widetilde{w}_j)\rangle dx\geq 0,
\end{equation*}
which leads to
\begin{align*}
W_{ij}&\geq \int_U (\langle \nabla w[\pt_i],\nabla \widetilde{w}_j\rangle+\langle \nabla w[\pt_i],\nabla \widetilde{w}_j\rangle-\langle \nabla \widetilde{w}_i,\nabla \widetilde{w}_j\rangle)dx\\
&=\int_{\pt \Omega}\left(\frac{\pt \widetilde{w}_i}{\pt \nu}\widetilde{w}_j-\frac{\pt w[\pt_i]}{\pt \nu}\widetilde{w}_j-\frac{\pt w[\pt_j]}{\pt \nu}\widetilde{w}_i\right)da\\
&=\int_{\pt \Omega}\left(a_{ip}a_{jq}\frac{\pt \Psi^{out}_{p}}{\pt \nu}\Psi_q+\langle \nu,\pt_i\rangle a_{jq}\Psi_q+\langle \nu,\pt_j\rangle a_{ip}\Psi_p\right)da\\
&=a_{ip}a_{jq}\int_{\pt \Omega}\frac{\pt \Psi^{out}_{p}}{\pt \nu}\Psi_q da+a_{jq}\int_\Omega \Psi_{iq}dx+a_{ip}\int_\Omega \Psi_{jp}dx.
\end{align*}
For the first term, we use Proposition~\ref{prop-jump} below to get
\begin{align*}
\int_{\pt \Omega}\frac{\pt \Psi^{out}_{p}}{\pt \nu}\Psi_q da&=\int_{\pt \Omega}\Psi^{out}_{pk}\nu_k\,\Psi_q da=\int_{\pt \Omega}(\Psi^{in}_{pk}-\nu_p\nu_k)\nu_k \,\Psi_q da\\
&=\int_{\pt \Omega}\frac{\pt \Psi^{in}_{p}}{\pt \nu}\Psi_q da-\int_{\pt \Omega}\langle  \Psi_q\,\pt_p,\nu\rangle da\\
&=\int_{ \Omega}\Psi_{pk}\Psi_{qk}dx-\int_{ \Omega}\Psi_{pq}dx.
\end{align*}
Recall
\begin{equation*}
\overline{\Psi}_{ij}:=\frac{\int_{ \Omega}\Psi_{ij}dx}{|\Omega|}.
\end{equation*}
Note that we have the matrix inequality
\begin{equation*}
\int_\Omega (\Psi_{ik}-\overline{\Psi}_{ik})(\Psi_{jk}-\overline{\Psi}_{jk})dx\geq 0,
\end{equation*}
which is the same as
\begin{equation*}
\int_{ \Omega}\Psi_{pk}\Psi_{qk}dx\geq \overline{\Psi}_{pk}\overline{\Psi}_{qk}|\Omega|.
\end{equation*}
Therefore we conclude
\begin{align*}
\frac{W_{ij}}{|\Omega|}&\geq a_{ip}a_{jq}(\overline{\Psi}_{pk}\overline{\Psi}_{qk}-\overline{\Psi}_{pq})+a_{jq}\overline{\Psi}_{iq}+a_{ip}\overline{\Psi}_{jp}.
\end{align*}
Now we choose
\begin{equation*}
A=-(I-\overline{\Psi})^{-1}.
\end{equation*}
So we get the matrix inequality
\begin{equation*}
\frac{W}{|\Omega|}\geq (I-\overline{\Psi})^{-1}-I.
\end{equation*}

\end{proof}

\subsection{Polarization}\label{sec5.2}

As a byproduct, let us consider the parallel results for the polarization of those in Section~\ref{sec5.1}. For any unit vector $e\in \SS^{n-1}$, there exists a unique solution, the polarization potential $v[e]\in E^1(U)$ (together with a unique constant $c[e]$) to the partial differential equation (Proposition~\ref{D-p})
\begin{equation*}
\begin{cases}
\Delta v[e]=0,\text{ in }U,\\
v[e]=\langle x,e\rangle +c[e], \text{ on }\pt U, \\
v[e]=O(|x|^{1-n}),\text{ as }x\rightarrow \infty.
\end{cases}
\end{equation*}
\begin{defn}[\cite{SS49,AK07}]
The polarization matrix $P_{ij}$ ($i,j=1,2,\dots,n$) is defined as
\begin{align*}
P_{ij}:=\int_U \langle \nabla v[\pt_i],\nabla v[\pt_j]\rangle dx.
\end{align*}
The geometric invariant $\mathrm{tr}\,P/n$ is called the average polarization $P_{ave}$.
\end{defn}

Now we prove the following sharp bound for $P$.
\begin{prop}\label{prop-polarization}
For any bounded domain $\Omega\subset \R^n$ ($n\geq 3$) with smooth boundary, we have
\begin{equation}
\frac{P}{|\Omega|}\geq \overline{\Psi}^{-1}-I.
\end{equation}
\end{prop}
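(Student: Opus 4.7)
The plan is to imitate the trial-function strategy used in the proof of Proposition~\ref{prop-virtual-mass}, now adapted to the Dirichlet setting of the polarization problem. For a symmetric constant matrix $a=[a_{ij}]$ to be chosen, I set
\begin{equation*}
\tilde{v}_i(x):=a_{ij}\Psi_j(x),
\end{equation*}
which lies in $E^1(U)$ and is harmonic in $U$ since $\Psi$ is harmonic there and $\Psi_j(x)=O(|x|^{1-n})$ at infinity. The matrix-valued inequality $\int_U\nabla(v[\pt_i]-\tilde v_i)\cdot\nabla(v[\pt_j]-\tilde v_j)\,dx\ge 0$ then yields $P\ge A+A^{T}-B$, where
\begin{equation*}
A_{ij}:=\int_U\nabla v[\pt_i]\cdot\nabla\tilde v_j\,dx,\qquad B_{ij}:=\int_U\nabla\tilde v_i\cdot\nabla\tilde v_j\,dx.
\end{equation*}

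The main task is to compute $A$ exactly and bound $B$ from above. For $A$, I integrate by parts on $U_R=U\cap B_R$, discard the boundary contribution at infinity using the decay of both $v[\pt_i]$ and $\Psi_k$, and feed in the Dirichlet condition $v[\pt_i]|_{\pt U}=x_i+c[\pt_i]$. The constant $c[\pt_i]$ drops because $\int_{\pt U}\pt_\nu\Psi_k^{out}\,da=0$ (the flux of the harmonic $\Psi_k$ into $U$, with zero flux at infinity by the odd asymptotics of $\pt_r\Psi_k$). For the remaining $x_i$ piece, I apply the divergence theorem to the vector field $x_i\nabla\Psi_k$ on $U_R$ and use the asymptotic expansion $\Psi_k(x)\sim|\Omega|x_k/(\omega_{n-1}|x|^n)$ to evaluate the contribution on $\pt B_R$; combined with the identity $\int_{\R^n}\Psi_{ik}\,dx=|\Omega|\delta_{ik}/n$ (proved by divergence theorem on a large ball), this gives
\begin{equation*}
\int_{\pt U}x_i\,\pt_\nu\Psi_k^{out}\,da=|\Omega|(\overline\Psi_{ik}-\delta_{ik}),
\end{equation*}
and hence $A=|\Omega|(I-\overline\Psi)\,a$ for symmetric $a$.

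For $B$, the identity $\int_U\Psi_{km}\Psi_{m\ell}\,dx=|\Omega|\overline\Psi_{k\ell}-\int_\Omega\Psi_{km}\Psi_{m\ell}\,dx$ already established in the proof of the preceding lemma, together with the matrix Cauchy--Schwarz inequality $\int_\Omega\Psi_{km}\Psi_{m\ell}\,dx\ge|\Omega|(\overline\Psi^2)_{k\ell}$, yields
\begin{equation*}
B\le|\Omega|\,a\,\overline\Psi(I-\overline\Psi)\,a^{T}.
\end{equation*}

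Putting these together and restricting the search to symmetric $a$ commuting with $\overline\Psi$ (a restriction the optimizer will satisfy), the lower bound simplifies to $P/|\Omega|\ge (I-\overline\Psi)\bigl[2a-a^{2}\overline\Psi\bigr]$. Matrix differentiation in $a$ then gives the optimum $a=\overline\Psi^{-1}$, which is well defined because $0<\overline\Psi<I$, and evaluating there produces
\begin{equation*}
\frac{P}{|\Omega|}\ge (I-\overline\Psi)\overline\Psi^{-1}=\overline\Psi^{-1}-I,
\end{equation*}
as desired. I expect the main obstacle to be the careful accounting of the boundary contributions at infinity in the integration by parts---in particular, establishing $\int_U\Psi_{ik}\,dx=|\Omega|\delta_{ik}/n-|\Omega|\overline\Psi_{ik}$ via the asymptotic expansion of $\Psi_k$; once this is in place, the remaining steps are essentially the matrix algebra that parallels the virtual-mass argument.
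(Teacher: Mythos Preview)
Your proposal is correct and follows essentially the same approach as the paper: the same trial functions $\tilde v_i=a_{ij}\Psi_j$, the same matrix inequality $\int_U\nabla(v[\pt_i]-\tilde v_i)\cdot\nabla(v[\pt_j]-\tilde v_j)\,dx\ge 0$, the same Cauchy--Schwarz estimate on $\int_\Omega\Psi_{km}\Psi_{m\ell}\,dx$, and the same optimal choice $a=\overline\Psi^{-1}$. The only computational difference is in evaluating the cross-term boundary integral $\int_{\pt\Omega}x_i\,\pt_\nu\Psi_k^{out}\,da$: the paper converts it via the jump relation of Proposition~\ref{prop-jump} and the divergence theorem on $\Omega$, whereas you obtain the same value by applying the divergence theorem on $U_R$ and reading off the $\pt B_R$ contribution from the asymptotic expansion of $\Psi_k$; both routes yield $|\Omega|(\overline\Psi_{ik}-\delta_{ik})$, and your treatment of the quadratic term still relies (through the cited identity from the preceding lemma) on the jump relation.
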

\begin{rem}
Proposition~\ref{prop-polarization} in the case $n=3$ appeared in \cite{Wal90}. Again we generalize here the result in \cite{Wal90} to the higher dimensional case.
\end{rem}
Taking trace and using the Cauchy--Schwarz inequality, we get the following.
\begin{corr}
For any bounded domain $\Omega\subset \R^n$ ($n\geq 3$) with smooth boundary, we have
\begin{equation}
P_{ave}\geq (n-1)|\Omega|.
\end{equation}
\end{corr}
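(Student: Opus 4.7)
The plan is to derive the scalar bound $P_{ave}\geq (n-1)|\Omega|$ as a direct consequence of the matrix inequality $P/|\Omega|\geq \overline{\Psi}^{-1}-I$ from Proposition~\ref{prop-polarization}, as the statement suggests. First I would take the trace of both sides of this matrix inequality, which yields
\begin{equation*}
\frac{n\,P_{ave}}{|\Omega|}=\frac{\mathrm{tr}\,P}{|\Omega|}\geq \mathrm{tr}(\overline{\Psi}^{-1})-n.
\end{equation*}
Hence it suffices to prove $\mathrm{tr}(\overline{\Psi}^{-1})\geq n^2$, which is exactly where the Cauchy--Schwarz step comes in.

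The second ingredient is to compute $\mathrm{tr}\,\overline{\Psi}$ explicitly. By definition
\begin{equation*}
\mathrm{tr}\,\overline{\Psi}=\sum_{i=1}^n \overline{\Psi}_{ii}=\frac{1}{|\Omega|}\int_\Omega \sum_{i=1}^n \Psi_{ii}\,dx=\frac{1}{|\Omega|}\int_\Omega \Delta \Psi\,dx,
\end{equation*}
and by \eqref{eq-harmonic} we have $\Delta \Psi=1$ in $\Omega$, so $\mathrm{tr}\,\overline{\Psi}=1$. Meanwhile the lemma preceding Proposition~\ref{prop-virtual-mass} guarantees $\overline{\Psi}>0$, so $\overline{\Psi}$ is a positive definite symmetric matrix whose eigenvalues $\lambda_1,\ldots,\lambda_n$ are positive and sum to $1$.

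Finally I would invoke the Cauchy--Schwarz (equivalently, AM--HM) inequality on these positive eigenvalues:
\begin{equation*}
\mathrm{tr}(\overline{\Psi}^{-1})=\sum_{i=1}^n \frac{1}{\lambda_i}\geq \frac{n^2}{\sum_{i=1}^n \lambda_i}=\frac{n^2}{\mathrm{tr}\,\overline{\Psi}}=n^2.
\end{equation*}
Substituting back into the traced inequality gives $n\,P_{ave}/|\Omega|\geq n^2-n=n(n-1)$, that is, $P_{ave}\geq (n-1)|\Omega|$, as asserted. No step looks like a serious obstacle here: the only mildly delicate point is making sure that taking trace in the operator inequality $P/|\Omega|\geq \overline{\Psi}^{-1}-I$ is legitimate, which holds because both sides are symmetric and trace is monotone on the cone of positive semidefinite matrices.
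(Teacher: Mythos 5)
Your proposal is correct and follows exactly the route the paper indicates: take the trace of the matrix inequality in Proposition~\ref{prop-polarization}, note that $\mathrm{tr}\,\overline{\Psi}=\frac{1}{|\Omega|}\int_\Omega \Delta\Psi\,dx=1$ since $\Delta\Psi=1$ in $\Omega$, and apply Cauchy--Schwarz (AM--HM) to the positive eigenvalues of $\overline{\Psi}$ to get $\mathrm{tr}(\overline{\Psi}^{-1})\geq n^2$, whence $P_{ave}\geq (n-1)|\Omega|$. This is the same argument the paper summarizes by ``taking trace and using the Cauchy--Schwarz inequality,'' with the details correctly filled in.
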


\begin{proof}[Proof of Proposition~\ref{prop-polarization}]

For a constant symmetric matrix $B=[b_{ij}]$ to be determined, define a family of functions $\widetilde{v}_i$ by
\begin{equation*}
\widetilde{v}_i(x)=b_{ij}\Psi_j(x).
\end{equation*}
We have the matrix inequality
\begin{equation*}
\int_U\langle \nabla (v[\pt_i]-\widetilde{v}_i), \nabla (v[\pt_j]-\widetilde{v}_j)\rangle dx\geq 0,
\end{equation*}
which leads to
\begin{align*}
P_{ij}&\geq \int_U (\langle \nabla v[\pt_i],\nabla \widetilde{v}_j\rangle+\langle \nabla v[\pt_i],\nabla \widetilde{v}_j\rangle-\langle \nabla \widetilde{v}_i,\nabla \widetilde{v}_j\rangle)dx\\
&=\int_{\pt \Omega}\left(\frac{\pt \widetilde{v}_i}{\pt \nu}\widetilde{v}_j-\frac{\pt \widetilde{v}_j}{\pt \nu}v[\pt_i]-\frac{\pt \widetilde{v}_i}{\pt \nu}v[\pt_j]\right)da\\
&=\int_{\pt \Omega}\left(b_{ip}b_{jq}\frac{\pt \Psi^{out}_{p}}{\pt \nu}\Psi_q- b_{jq}\frac{\pt \Psi^{out}_q}{\pt \nu}(\langle x,\pt_i\rangle+c[\pt_i])- b_{ip}\frac{\pt \Psi^{out}_p}{\pt \nu}(\langle x,\pt_j\rangle+c[\pt_j])\right)da\\
&=b_{ip}b_{jq}\int_{\pt \Omega}\frac{\pt \Psi^{out}_{p}}{\pt \nu}\Psi_q da- b_{jq}\int_{\pt \Omega}\frac{\pt \Psi^{out}_q}{\pt \nu}\langle x,\pt_i\rangle da- b_{ip}\int_{\pt \Omega}\frac{\pt \Psi^{out}_p}{\pt \nu}\langle x,\pt_j\rangle da.
\end{align*}
For the first term, we use Proposition~\ref{prop-jump} below to get
\begin{align*}
\int_{\pt \Omega}\frac{\pt \Psi^{out}_{p}}{\pt \nu}\Psi_q da&=\int_{\pt \Omega}\Psi^{out}_{pk}\nu_k\,\Psi_q da=\int_{\pt \Omega}(\Psi^{in}_{pk}-\nu_p\nu_k)\nu_k \,\Psi_q da\\
&=\int_{\pt \Omega}\frac{\pt \Psi^{in}_{p}}{\pt \nu}\Psi_q da-\int_{\pt \Omega}\langle  \Psi_q\,\pt_p,\nu\rangle da\\
&=\int_{ \Omega}\Psi_{pk}\Psi_{qk}dx-\int_{ \Omega}\Psi_{pq}dx\\
&\geq \overline{\Psi}_{pk}\overline{\Psi}_{qk}|\Omega|-\int_{ \Omega}\Psi_{pq}dx,
\end{align*}
where the inequality appeared in the proof of Proposition~\ref{prop-virtual-mass}. For the second term, we get
\begin{align*}
\int_{\pt \Omega}\frac{\pt \Psi^{out}_q}{\pt \nu}\langle x,\pt_i\rangle da&=\int_{\pt \Omega}(\Psi^{in}_{qk}-\nu_q\nu_k)\nu_k\langle x,\pt_i\rangle da\\
&=\int_{\pt \Omega}\frac{\pt \Psi^{in}_q}{\pt \nu}\langle x,\pt_i\rangle da-\int_{\pt \Omega}\langle \nu,\pt_q\rangle \langle x,\pt_i\rangle da\\
&=\int_\Omega \Psi_{iq}dx-\delta_{iq}|\Omega|.
\end{align*}

Therefore we conclude
\begin{align*}
\frac{P_{ij}}{|\Omega|}&\geq b_{ip}b_{jq}(\overline{\Psi}_{pk}\overline{\Psi}_{qk}-\overline{\Psi}_{pq})-b_{jq}(\overline{\Psi}_{iq}-\delta_{iq})-b_{ip}(\overline{\Psi}_{jp}-\delta_{jp}).
\end{align*}
Now we choose
\begin{equation*}
B=\overline{\Psi}^{-1}.
\end{equation*}
So we get the matrix inequality
\begin{equation*}
\frac{P}{|\Omega|}\geq \overline{\Psi}^{-1}-I.
\end{equation*}

\end{proof}

\subsection{Gravitational potential}\label{sec5.3}

In $\R^n$ ($n\geq 3$), define the gravitational potential as
\begin{equation*}
\Psi(x):=-\frac{1}{(n-2)\omega_{n-1}}\int_\Omega \frac{1}{|y-x|^{n-2}}dy,
\end{equation*}
where $\omega_{n-1}=|\SS^{n-1}|$.

This subsection is devoted to proving the following.
\begin{prop}\label{prop-jump}
Let $\Omega\subset \R^n$ $(n\geq 3)$ be a bounded domain in $\R^n$ with smooth boundary. Fix $x_0\in \pt \Omega$. For the second derivatives of $\Psi(x)$, we have
\begin{equation}\label{eq-limit}
\lim_{\Omega \ni x \rightarrow x_0}\Psi_{ij}(x)-\lim_{U \ni x \rightarrow x_0}\Psi_{ij}(x)=\nu_i(x_0)\nu_j(x_0).
\end{equation}
\end{prop}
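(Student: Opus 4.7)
The plan is to exploit the well-known fact that $\Psi$ is of class $C^1(\R^n)$ (the Newtonian potential of a bounded, compactly supported function), while $\Delta\Psi$ equals $1$ classically in $\Omega$ and $0$ classically in $U$. Given these two inputs, the jump $[\Psi_{ij}]:=\Psi_{ij}^{in}(x_0)-\Psi_{ij}^{out}(x_0)$ at a boundary point $x_0\in\partial\Omega$ will be forced into the rank-one form $c\,\nu_i\nu_j$, and the scalar $c$ is read off from the jump of $\Delta\Psi$.

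First I would establish that the second derivatives $\Psi_{ij}$ admit continuous limits from each side of $\partial\Omega$. Since $\partial\Omega$ is smooth, classical Newton-potential theory gives $\Psi\in C^{1,\alpha}(\R^n)$ for every $\alpha\in(0,1)$. On $\Omega$, $\Psi$ solves $\Delta\Psi=1$ with boundary datum $\Psi|_{\partial\Omega}\in C^{1,\alpha}$, so Schauder boundary regularity yields $\Psi\in C^{2,\alpha}(\overline\Omega)$. On the exterior side, $\Psi$ is harmonic in $U$ with the same $C^{1,\alpha}$ trace on $\partial\Omega$ and the right decay at infinity, so $\Psi\in C^{2,\alpha}(\overline{U}\cap B_R)$ for any large ball $B_R$ by the same Schauder estimate. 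Thus both $\Psi^{in}_{ij}(x_0)$ and $\Psi^{out}_{ij}(x_0)$ exist as continuous functions of $x_0\in\partial\Omega$.

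Next, the $C^1$-regularity of $\Psi$ severely constrains the jump. For any vector $T$ tangent to $\partial\Omega$ at $x_0$, continuity of $\Psi_i$ across $\partial\Omega$ implies that the tangential derivative $T_k\Psi^{in}_{ik}(x_0)$ computed from inside agrees with $T_k\Psi^{out}_{ik}(x_0)$ computed from outside. Hence $T_k[\Psi_{ik}]=0$ for every tangent vector $T$, forcing $[\Psi_{ik}]=\nu_k B_i$ for some vector $B_i$. The symmetry $[\Psi_{ik}]=[\Psi_{ki}]$ then gives $\nu_k B_i=\nu_i B_k$, i.e.\ $B_i=c\nu_i$, and consequently
\[
[\Psi_{ij}]=c\,\nu_i\nu_j.
\]
Taking the trace $i=j$ and using $|\nu|^2=1$ gives $c=[\Delta\Psi]=1-0=1$, which is the desired identity \eqref{eq-limit}.

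The main obstacle is the boundary regularity assertion that $\Psi$ extends to a $C^2$ function on $\overline\Omega$ and on $\overline U$ up to $\partial\Omega$; once that is in hand, the rest is a short linear-algebra argument combined with the classical jump of $\Delta\Psi$. A self-contained alternative avoiding Schauder would be to start from the single-layer representation $\Psi_i(x)=\frac{1}{(n-2)\omega_{n-1}}\int_{\partial\Omega}\nu_i(y)|y-x|^{2-n}da(y)$, obtained from the volume formula for $\Psi_i$ via the divergence theorem (after cutting out a small ball around $x$ when $x\in\Omega$ and using the spherical symmetry to discard the boundary contribution on $\partial B_r(x)$), and then differentiating in $x$ to obtain a dipole-type integral whose classical jump relations give the same formula $[\Psi_{ij}]=\nu_i\nu_j$.
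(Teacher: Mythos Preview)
Your overall strategy is correct and substantially different from the paper's, but one step needs repair. The Schauder appeal is not justified as written: knowing only $\Psi|_{\partial\Omega}\in C^{1,\alpha}$ as Dirichlet datum, standard boundary Schauder estimates yield $\Psi\in C^{1,\alpha}(\overline\Omega)$, not $C^{2,\alpha}(\overline\Omega)$---one needs $C^{2,\alpha}$ boundary values for the latter. You evidently suspected this, and your single-layer alternative is indeed the right fix: once one writes $\Psi_i$ as the single-layer potential with smooth density $\nu_i$ on the smooth surface $\partial\Omega$, classical potential theory gives $\Psi_i$ smooth on $\overline\Omega$ and on $\overline U$ up to $\partial\Omega$ from each side, so the one-sided limits $\Psi_{ij}^{in}$, $\Psi_{ij}^{out}$ exist and are continuous on $\partial\Omega$. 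After that, your linear-algebra argument (tangential derivatives of the globally continuous $\Psi_i$ computed from either side must agree, forcing $[\Psi_{ij}]=c\,\nu_i\nu_j$ by symmetry; then $c=[\Delta\Psi]=1-0=1$) is clean and correct.

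By contrast, the paper's proof is entirely computational and self-contained: it rewrites $\Psi_{ij}(x)$ as an explicit singular volume integral over $\Omega$ (with an additional $\tfrac{1}{n}\delta_{ij}$ appearing on the interior side from excising a small ball around $x$), then chooses coordinates at $x_0$ with $\nu(x_0)=e_n$, approximates $\Omega\cap B_r(x_0)$ by a half-ball, and evaluates the resulting half-ball integrals $\widetilde I_{ij}$ explicitly via hyperbolic substitutions and symmetry identities, obtaining the jump entry by entry. Your route replaces all of that calculation by the structural observation that the jump matrix is a symmetric rank-one tensor in $\nu$ with trace fixed by the jump of $\Delta\Psi$. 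The trade-off is clear: the paper needs no external regularity input but pays with several pages of integrals, whereas your argument is a few lines once the one-sided boundary smoothness of single-layer potentials is taken as a black box.
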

\begin{proof}
Note that for $y\neq x$, we have
\begin{equation*}
\mathrm{div}_y\big(\frac{y-x}{|y-x|^{n-2}}\big)=\frac{2}{|y-x|^{n-2}}.
\end{equation*}
So for $x\in U$, we get
\begin{align*}
\Psi(x)&=-\frac{1}{2(n-2)\omega_{n-1}}\int_\Omega \mathrm{div}_y\big(\frac{y-x}{|y-x|^{n-2}}\big)dy\\
&=-\frac{1}{2(n-2)\omega_{n-1}}\int_{\pt \Omega} \frac{\langle y-x,\nu\rangle }{|y-x|^{n-2}}da(y);
\end{align*}
while for $x\in \Omega$, by choosing a small ball $B_r(x)\subset \Omega$, we get
\begin{align*}
\Psi(x)&=-\frac{1}{(n-2)\omega_{n-1}}\lim_{r\rightarrow 0+}\int_{\Omega\setminus B_r(x)} \frac{1}{|y-x|^{n-2}}dy\\
&=-\frac{1}{2(n-2)\omega_{n-1}}\lim_{r\rightarrow 0+}\int_{\Omega\setminus B_r(x)} \mathrm{div}_y\big(\frac{y-x}{|y-x|^{n-2}}\big)dy\\
&=-\frac{1}{2(n-2)\omega_{n-1}}\lim_{r\rightarrow 0+}\left(\int_{\pt \Omega} \frac{\langle y-x,\nu\rangle }{|y-x|^{n-2}}da(y)-\int_{\pt B_r(x)} \frac{\langle y-x,\nu\rangle }{|y-x|^{n-2}}da(y)\right)\\
&=-\frac{1}{2(n-2)\omega_{n-1}}\int_{\pt \Omega} \frac{\langle y-x,\nu\rangle }{|y-x|^{n-2}}da(y).
\end{align*}
In either case, we have
\begin{align*}
\Psi(x)&=-\frac{1}{2(n-2)\omega_{n-1}}\int_{\pt \Omega} \frac{\langle y-x,\nu\rangle }{|y-x|^{n-2}}da(y).
\end{align*}
Then we obtain its first derivatives
\begin{align*}
\Psi_i(x)&=-\frac{1}{2(n-2)\omega_{n-1}}\int_{\pt \Omega} \left(\frac{-\nu_i}{|y-x|^{n-2}}+(n-2)\frac{\langle y-x,\nu\rangle (y_i-x_i)}{|y-x|^{n}}\right)da(y),
\end{align*}
and its second derivatives
\begin{align*}
\Psi_{ij}(x)&=-\frac{1}{2\omega_{n-1}}\int_{\pt \Omega} \bigg(\frac{\nu_i(x_j-y_j)+\nu_j(x_i-y_i)}{|y-x|^n}-\delta_{ij}\frac{\langle y-x,\nu\rangle }{|y-x|^n}\\
&+n\langle y-x,\nu\rangle \frac{(x_i-y_i)(x_j-y_j)}{|y-x|^{n+2}}\bigg)da(y)\\
&=:-\frac{1}{2\omega_{n-1}}\int_{\pt \Omega}\langle \nu,X_{ij}(y)\rangle da(y),
\end{align*}
where we defined a vector field $X_{ij}(y)$ for $y\neq x$ as
\begin{align*}
X_{ij}(y)&=\frac{(x_j-y_j)\pt_i+(x_i-y_i)\pt_j}{|y-x|^n}-\delta_{ij}\frac{y-x}{|y-x|^n}+n (y-x) \frac{(x_i-y_i)(x_j-y_j)}{|y-x|^{n+2}}.
\end{align*}

Note that for $y\neq x$, by direct computation we have
\begin{align*}
&\mathrm{div}_y(X_{ij}(y))=2\left(n\frac{(y_i-x_i)(y_j-x_j)}{|y-x|^{n+2}}- \frac{\delta_{ij}}{|y-x|^n}\right).
\end{align*}
So for $x\in U$, we can conclude
\begin{align*}
\Psi_{ij}(x)&=-\frac{1}{\omega_{n-1}}\int_{ \Omega} \left(n\frac{(y_i-x_i)(y_j-x_j)}{|y-x|^{n+2}}- \frac{\delta_{ij}}{|y-x|^n}\right)dy;
\end{align*}
while for $x\in \Omega$, by choosing again a small ball $B_r(x)\subset \Omega$, we get
\begin{align*}
\Psi_{ij}(x)&=-\frac{1}{2\omega_{n-1}}\int_{\pt \Omega}\langle \nu,X_{ij}(y)\rangle da(y)\\
&=-\frac{1}{2\omega_{n-1}}\left(\int_{ \Omega\setminus B_r(x)}\mathrm{div}_y(X_{ij}(y))dy+\int_{\pt B_r(x)}\langle \nu,X_{ij}(y)\rangle da(y)\right).
\end{align*}
Direct computation yields
\begin{align*}
&\int_{\pt B_r(x)}\langle \nu,X_{ij}(y)\rangle da(y)=-\frac{2}{n}\omega_{n-1}\delta_{ij}.
\end{align*}
So for $x\in \Omega$ we get
\begin{align*}
\Psi_{ij}(x)&=-\frac{1}{\omega_{n-1}}\int_{ \Omega\setminus B_r(x)}\left(n\frac{(y_i-x_i)(y_j-x_j)}{|y-x|^{n+2}}- \frac{\delta_{ij}}{|y-x|^n}\right)dy+\frac{1}{n}\delta{ij}\\
&=-\frac{1}{\omega_{n-1}}\int_{ \Omega}\left(n\frac{(y_i-x_i)(y_j-x_j)}{|y-x|^{n+2}}- \frac{\delta_{ij}}{|y-x|^n}\right)dy+\frac{1}{n}\delta{ij},
\end{align*}
where in the last step we took $r\rightarrow 0+$.

Without loss of generality, we choose $x_0$ as the origin and $\nu$ as the positive $x_n$-direction.

Note first that for $r>0$ sufficiently small, the intersection set $\Omega \cap B_r(x_0)$ is approximately a half $n$-ball $B^-_r(x_0)$. Fix a small $r>0$. Then we intend to compare the integrals in the expression of $\Psi_{ij}(x)$ over $\Omega \cap B_r(x)$ and over $\Omega \setminus B_r(x)$ for $x\in U$ or $x\in \Omega$.

For the integral over $\Omega \setminus B_r(x)$, it is continuous with respect to $x$. It remains to consider the integral over $\Omega \cap B_r(x)$, namely,
\begin{equation*}
I_{ij}(x):=\int_{ \Omega\cap B_r(x)}f_{ij}(x,y)dy,
\end{equation*}
where
\begin{equation*}
f_{ij}(x,y):=n\frac{(y_i-x_i)(y_j-x_j)}{|y-x|^{n+2}}- \frac{\delta_{ij}}{|y-x|^n}.
\end{equation*}

When $i\neq j$, since
\begin{equation*}
\int_{B^-_r(x_0)}\left(n\frac{(y_i-(x_0)_i)(y_j-(x_0)_j)}{|y-x_0|^{n+2}}\right)dy=0,
\end{equation*}
both $\lim_{\Omega \ni x \rightarrow x_0}I_{ij}(x)$ and $\lim_{U \ni x \rightarrow x_0}I_{ij}(x)$ can be as close to zero as possible, as long as $r$ is chosen at first very small. So we finish the proof of \eqref{eq-limit} for $i\neq j$.

Next we consider $i=j$. It suffices to consider the two cases $i=j=1$ and $i=j=n$. Define the approximating quantities
\begin{equation*}
\widetilde{I}_{ij}(x):=\int_{ \{y\in \R^n|y_n<0\}\cap B_r(x)}\left(n\frac{(y_i-x_i)(y_j-x_j)}{|y-x|^{n+2}}- \frac{\delta_{ij}}{|y-x|^n}\right)dy.
\end{equation*}
We only need to compute
\begin{align*}
&A_-:=\lim_{\R^n_-\ni x\rightarrow 0}\widetilde{I}_{11}(x),\quad
A_+:=\lim_{\R^n_+\ni x\rightarrow 0}\widetilde{I}_{11}(x),\\
&B_-:=\lim_{\R^n_-\ni x\rightarrow 0}\widetilde{I}_{nn}(x),\quad
B_+:=\lim_{\R^n_+\ni x\rightarrow 0}\widetilde{I}_{nn}(x),
\end{align*}
where $\R^n_-=\R^n\cap \{x_n<0\}$ and $\R^n_+=\R^n\cap \{x_n>0\}$.

Note that we have the following identities,
\begin{align*}
A_-+A_+&=\int_{B_r(0)} \left(n\frac{y_1^2}{|y|^{n+2}}- \frac{1}{|y|^n}\right)dy\\
&=\int_{B_r(0)} \left(\frac{\sum_{k=1}^ny_k^2}{|y|^{n+2}}- \frac{1}{|y|^n}\right)dy=0,\\
B_-+B_+&=\int_{B_r(0)} \left(n\frac{y_n^2}{|y|^{n+2}}- \frac{1}{|y|^n}\right)dy=0,\\
(n-1)A_-+B_-&=\int_{\R^n_-\cap B_r(0)} \left(n\frac{|y|^2}{|y|^{n+2}}- \frac{n}{|y|^n}\right)dy=0.
\end{align*}
So we only need to compute one of them. Let us compute $B_+$. Equivalently, we derive
\begin{align*}
B_+&=\lim_{\varepsilon \rightarrow 0+}\int_{B_r(0)\cap \{y_n>\varepsilon\}}\left(n\frac{y_n^2}{|y|^{n+2}}- \frac{1}{|y|^n}\right)dy\\
&=\lim_{\varepsilon \rightarrow 0+}\int_\varepsilon^rdy_n \int_{|x'|<\sqrt{r^2-y_n^2}}\left(n\frac{y_n^2}{(|x'|^2+y_n^2)^{(n+2)/2}}-\frac{1}{(|x'|^2+y_n^2)^{n/2}}\right)dx'\\
&=\lim_{\varepsilon \rightarrow 0+}\int_\varepsilon^rdy_n \int_0^{\sqrt{r^2-y_n^2}}\left(n\frac{y_n^2}{(s^2+y_n^2)^{(n+2)/2}}-\frac{1}{(s^2+y_n^2)^{n/2}}\right)\omega_{n-2}s^{n-2}ds.
\end{align*}
Let $s=y_n\sinh t$. Then we have
\begin{align*}
B_+&=\lim_{\varepsilon \rightarrow 0+}\int_\varepsilon^r\frac{\omega_{n-2}}{y_n}dy_n \int_0^{\mathrm{arsinh}(\sqrt{r^2-y_n^2}/y_n)}\frac{n-\cosh^2 t}{\cosh^{n+1}t}\sinh^{n-2}t\,dt.\\
&=\lim_{\varepsilon \rightarrow 0+}\int_\varepsilon^r\frac{\omega_{n-2}}{y_n}dy_n \,\frac{\sinh^{n-1}t}{\cosh^n t}\bigg|_0^{\mathrm{arsinh}(\sqrt{r^2-y_n^2}/y_n)}\\
&=\lim_{\varepsilon \rightarrow 0+}\int_\varepsilon^r\frac{\omega_{n-2}}{y_n}\frac{(\sqrt{r^2-y_n^2}/y_n)^{n-1}}{(1+(\sqrt{r^2-y_n^2}/y_n)^2)^{n/2}}dy_n\\
&=\omega_{n-2}\lim_{\varepsilon \rightarrow 0+}\int_\varepsilon^rr^{-n}(r^2-y_n^2)^{(n-1)/2}dy_n\\
&=\omega_{n-2}\int_0^{\pi/2}\sin^n t\, dt=\omega_{n-2}\frac{n-1}{n}\int_0^{\pi/2}\sin^{n-2} t\, dt\\
&=\frac{n-1}{2n}\omega_{n-1}.
\end{align*}
Then we can deduce
\begin{align*}
B_-=-\frac{n-1}{2n}\omega_{n-1},\quad A_-=\frac{1}{2n}\omega_{n-1},\quad A_+=-\frac{1}{2n}\omega_{n-1}.
\end{align*}
Now we are ready to prove the remaining conclusion. Let $\eta>0$ be arbitrary. Then we can choose $r$ small enough such that
\begin{align*}
&|\lim_{U\ni x\rightarrow x_0}\int_{B_r(x)\cap \Omega}f_{nn}(x,y)dy-B_+|<\omega_{n-1}\frac{\eta }{2},\\
&|\lim_{\Omega \ni x\rightarrow x_0}\int_{B_r(x)\cap \Omega}f_{nn}(x,y)dy-B_-|<\omega_{n-1}\frac{\eta }{2}.
\end{align*}
Note that
\begin{align*}
\lim_{U\ni x\rightarrow x_0}\Psi_{nn}(x)&=-\frac{1}{\omega_{n-1}}\int_{\Omega\setminus B_r(x_0)}f_{nn}(x_0,y)dy\\
&-\frac{1}{\omega_{n-1}}\lim_{U\ni x\rightarrow x_0}\int_{B_r(x)\cap \Omega}f_{nn}(x,y)dy,\\
\lim_{\Omega\ni x\rightarrow x_0}\Psi_{nn}(x)&=-\frac{1}{\omega_{n-1}}\int_{\Omega\setminus B_r(x_0)}f_{nn}(x_0,y)dy\\
&-\frac{1}{\omega_{n-1}}\lim_{\Omega\ni x\rightarrow x_0}\int_{B_r(x)\cap \Omega}f_{nn}(x,y)dy+\frac{1}{n}.
\end{align*}
So we obtain
\begin{align*}
&|\lim_{\Omega\ni x\rightarrow x_0}\Psi_{nn}(x)-\lim_{U\ni x\rightarrow x_0}\Psi_{nn}(x)-1|\\
&< |-\frac{1}{\omega_{n-1}}(B_--B_+)+\frac{1}{n}-1|+\eta=\eta,
\end{align*}
which implies
\begin{align*}
\lim_{\Omega\ni x\rightarrow x_0}\Psi_{nn}(x)=\lim_{U\ni x\rightarrow x_0}\Psi_{nn}(x)+1.
\end{align*}
Similar arguments show that
\begin{align*}
\lim_{\Omega\ni x\rightarrow x_0}\Psi_{ii}(x)=\lim_{U\ni x\rightarrow x_0}\Psi_{ii}(x),\quad i=1,2,\dots,n-1.
\end{align*}
So we finish the proof of the Proposition~\ref{prop-jump}.

\end{proof}

\bibliographystyle{Plain}

\end{document}